\documentclass{amsart}
\setlength\topmargin{0mm}
\setlength\textheight{230mm}
\setlength\oddsidemargin{5mm}
\setlength\evensidemargin{5mm}
\setlength\textwidth{150mm}
\usepackage[dvipdfmx]{graphicx}
\usepackage[dvipdfmx]{color}
\usepackage{xcolor}
\usepackage{verbatim}
\usepackage{amsmath}
\usepackage{amssymb, mathrsfs}
\usepackage{amsbsy}
\usepackage{amscd}
\usepackage{amsthm}
\usepackage{bm}

\newcommand{\N}{\mathbb{N}}
\newcommand{\R}{\mathbb{R}}
\newcommand{\Z}{\mathbb{Z}}
\newcommand{\OO}{\mathbb{O}}
\newcommand{\E}{\mathbb{E}}

\newcommand{\e}{\epsilon}

\newcommand{\sgn}{\mathop{\mathrm{sgn}}\nolimits}
\renewcommand{\P}{\mathbb{P}}

\newcommand{\I}{\mathcal{I}}

\newtheorem{prop}{Proposition}

\newtheorem{thm}{Theorem}
\newtheorem{lem}{Lemma}
\newtheorem{Def}{Definition}
\theoremstyle{definition}
\newtheorem{remark}{Remark}
%%%%%%%%%%%%%%%%%%%% COMMAND FOR REVISION %%%%%%%%%%%%%%%%%%%%%%%%%%%
\usepackage{color}
\definecolor{lilas}{RGB}{182, 102, 210}

\numberwithin{equation}{section}
\begin{document}
\title{Ergodicity of the number of infinite geodesics originating from zero}
\date{\today}
\author{Shuta Nakajima} 
\address[Shuta Nakajima]
{Research Institute in Mathematical Sciences, 
Kyoto University, Kyoto, Japan}
\email{njima@kurims.kyoto-u.ac.jp}

\keywords{random environment, first-passage percolation.}
\subjclass[2010]{Primary 60K37; secondary 60K35; 82A51; 82D30}

\begin{abstract}
  First-passage percolation is a random growth model which has a metric structure. An infinite geodesic is an infinite sequence whose all sub-sequences are shortest paths. One of the important quantity is the number of infinite geodesics originating from the origin.  When $d=2$ and an edge distribution is continuous, it is proved to be almost surely constant  [D. Ahlberg, C. Hoffman. Random coalescing geodesics in first-passage percolation]. In this paper, we will prove the same result for higher dimensions and general distributions.
  %and general distribution conditions.
\end{abstract}

\maketitle

\section{Introduction}
First-passage percolation was first introduced by Hammesley and Welsh
in 1965, as a model of fluid flow in random medium. In this model, we
consider the first passage time on $\Z^d$-lattice equipped with random
weights. A path is said to be optimal if it attains the first passage
time. Under weak conditions on distributions, the first passage times between two points define
have a metric structure. Therefore, optimal paths can be seen as geodesics and are central objects
of this model. An infinite geodesic is an infinite path of $\Z^d$
whose all sub-sequences are optimal paths. One of the important quantity is the number of infinite geodesics originating from the origin. It is expected to be infinity
and proved rigorously under un-proven limiting shape assumption when
the dimension is greater than or equal to $2$ in \cite{New95}. However, it is currently best known to be
at least $4$, which is shown in \cite{Hoff08}. See \cite{50,AH16} for more background and related works on infinite geodesics. The important property is that two infinite
geodesics tend to coalesce, which is called "coalescing
property". It is established in the case $d=2$ case for continuous distributions \cite{DH14,AH16}. This property allows us to use ergodic theory and Ahlberg and Hoffman showed
that the number of infinite geodesics originating from the origin is almost
surely constant \cite{AH16}. Note that their methods rely on the uniqueness of optimal paths between any two points, which follows from the continuity of the distribution, and special geometric properties of $\Z^2$-lattice, which for example allows one to define the counter-clockwise labeling of infinite geodesics. Our aim of this paper is to develop new techniques to establish the coalescing property for more general frameworks. And we will prove the above result both for general dimensions and distributions.\\

\subsection{Setting}
We consider the first-passage percolation on the lattice $\Z^d$ with $d\geq{}2$. The model is defined as follows. The vertices are the elements of $\Z^d$. Let us denote by $E^d$  the set of edges:
$$E^d=\{\{ v,w \}|~v,w\in\Z^d,~|v-w|_1=1\},$$
where we set $|v-w|_1=\sum^d_{i=1}|v_i-w_i|$ for $v=(v_1,\cdots,v_d)$, $w=(w_1,\cdots,w_d)$. Note that we consider non-oriented edge in this paper and we sometimes regard $\{ v,w \}$ as a subset of $\Z^d$ with a slight abuse of notation. We assign a non-negative random variable $\tau_e$ on each edge $e\in E^d$ as the passage time of $e$. The collection $\tau=\{\tau_e\}_{e\in E^d}$ is assumed to be independent and identically distributed with common distribution $F$. A path $\gamma$ is a finite sequence of vertices $(x_1,\cdots,x_l)\subset\Z^d$ such that for any $i\in\{1,\cdots,l-1\}$, $\{x_i,x_{i+1}\}\in E^d$. It is useful to regard a path as a subset of edges:
\begin{equation}\label{edge-vertex}
  \gamma=(\{x_{i},x_{i+1}\})^{l-1}_{i=1}.
\end{equation}
Without otherwise noted, we use this convention. Let us define the length of a path $\gamma$ as $\sharp \gamma=l-1$.\\
%\begin{comment}
Given a path $\gamma$, we define the passage time of $\gamma$ as
$$t(\gamma)=\sum_{e\in\gamma}\tau_e.$$
 Given two vertices $v,w\in\R^d$, we define the {\em first passage time} between vertices $v$ and $w$ as
$$t(v,w)=\inf_{\gamma:v\to w}t(v,w),$$
 where the infimum was taken over all finite paths $\gamma$ starting at $v$ and ending at $w$. A path from $v$ to $w$ is said to be optimal if it attains the first passage time, i.e., $t(\gamma)=t(v,w)$. We denote by $\mathbb{O}(v,w)$ the set of all optimal paths from $v$ to $w$. If $F$ is continuous, i.e., $\P(\tau_e=a)=0$ for any $a\in\R$, when we fix starting and ending point, then an optimal path is uniquely determined. Then we still denote by $\OO(v,w)$ this optimal path with a slight abuse of notation.\\

 We say that an infinite sequence $(x_1,x_2\cdots)\subset\Z^d$ is an infinite geodesic if for any $1\le i<j$, $(x_i,\cdots,x_j)$ is an optimal path from $x_i$ to $x_j$. Denote by  $\mathcal{I}$ the set of all infinite geodesics and  $\mathcal{I}(v)$ the set of all infinite geodesics originating from $v$. Given two infinite geodesics $\Gamma_1$ and $\Gamma_2$, we say that they are distinct if $\sharp \{x\in\Z^d|~x\in \Gamma_1\cap \Gamma_2\}<\infty$, where we regard $\Gamma_1$ and $\Gamma_2$ as subsets of vertices in this definition. Otherwise, we say that $\Gamma_1$ and $\Gamma_2$ coalesce and write $\Gamma_1\sim\Gamma_2$. Let $\mathcal{N}=\mathcal{N}(\tau)\in\N\cup\{\infty\}$ be the number of  distinct infinite geodesics:
 $$\mathcal{N}=\max\{k\in\N\cup\{\infty\}|~\exists \Gamma_1,\cdots,\Gamma_k\in\mathcal{I}\text{  such that }\Gamma_i\not\sim \Gamma_j\text{ for any $i\neq j$}\}.$$
 We define the number of  distinct infinite geodesics originating from $v\in\Z^d$ as $$\mathcal{N}_v=\max\{k\in\N\cup\{\infty\}|~\exists \Gamma_1,\cdots,\Gamma_k\in\mathcal{I}(v)\text{  such that }\Gamma_i\not\sim \Gamma_j\text{ for any $i\neq j$.}\}.$$ Since $\mathcal{N}$ is invariant under lattice shift, by ergodicity, it is almost surely constant \cite{50}: there exists $N\in\N\cup\{\infty\}$ such that
 \begin{equation}\label{const-eq}
   \P(\mathcal{N}=N)=1.
   \end{equation}
 If $F$ is continuous, then since an optimal path is uniquely determined between any two vertices, it is easy to check that $\sim$ is an equivalence relation, $\mathcal{N}=\sharp [\mathcal{I}/\sim]$ and $\mathcal{N}_v=\sharp \mathcal{I}_v$.
 %\end{comment}
 \begin{comment}
  Given a path $\Gamma$, we define the passage time of $\Gamma$ as
$$t(\Gamma)=\sum_{e\in\Gamma}\tau_e.$$
 Given two vertices $v,w\in\R^d$, we define the {\em first passage time} between vertices $v$ and $w$ as
$$t(v,w)=\inf_{\Gamma:v\to w}t(v,w),$$
where the infimum was taken over all finite paths $\Gamma$ whose starting at $v$ and ending at $w$. A path from $v$ to $w$ is said to be optimal if it attains the first passage time $t(v,w)$, i.e., $t(\Gamma)=t(v,w)$. Denote by $\mathbb{O}(v,w)$ the set of optimal paths from $v$ to $w$.\\

Since $F$ is continuous, when we fix starting and ending point, an optimal path is uniquely determined. We denote by $\mathbb{O}(v,w)$ the optimal path from $v$ to $w$. We say that an infinite sequence $(x_1,x_2\cdots)$ is geodesic ray if for any $1\le i<j$, $(x_i,\cdots,x_j)$ is optimal path from $x_i$ to $x_j$. Denote by $\mathcal{I}(v)$ the set of infinite geodesics originating from $v$. Given two infinite geodesics $\Gamma_1$ and $\Gamma_2$, we say that they are distinct if $\sharp [\Gamma_1\bigtriangleup \Gamma_2]=\infty$, where $\bigtriangleup$ is symmetric difference. Let $\mathcal{N}=\mathcal{N}(\tau)\in\N\cup\{\infty\}$ be the number of  distinct infinite geodesics and $\mathcal{N}_v=\sharp \mathcal{I}(v)\in\N\cup\{\infty\}$. Since $\mathcal{N}$ is invariant under lattice shift, by ergodicity, it is almost surely constant: there exists $N\in\N\cup\{\infty\}$ such that
\begin{equation}
  \P(\mathcal{N}=N)=1. \label{const-eq}
\end{equation}
\end{comment}
 \subsection{Main results}
 \begin{Def}
  A distribution $F$ is said to be {\em useful} if  
   \begin{equation}\label{Def:useful}
     \P(\tau_e=F^-)<
    \begin{cases}
    p_c(d) & \text{if $F^-=0$} \\
    \vec{p}_c(d)& \text{otherwise},
    \end{cases}
    \end{equation}
   where $p_c(d)$ and $\vec{p}_c(d)$ stand for the critical probabilities for $d$-dimensional percolation and oriented percolation model, respectively and $F^-$ is the infimum of the support of $F$.
 \end{Def}
 Note that if $F$ is continuous, then $F$ is useful.\\
 \begin{comment}
   Our first result shows that even if we restrict the starting point, the number of infinite geodesics does not change. As a corollary, we have that $\mathcal{N}_0$ is almost surely constant.
   \end{comment}
\begin{thm}\label{main-thm}
  Suppose that $F$ is useful and there exists $\alpha>0$ such that $\E \exp{(\alpha\tau_e)}<\infty$. Then the following holds almost surely: for any $v\in\Z^d$,
  \begin{equation}
  \mathcal{N}_v=\mathcal{N}. \label{main-eq}
\end{equation}
  In particular, by \eqref{const-eq},
   $$\mathcal{N}_v\text{ is almost surely constant.}$$
\end{thm}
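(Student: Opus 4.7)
Since $\mathcal{N}_v\le \mathcal{N}$ holds deterministically and $\mathcal{N}=N$ almost surely by \eqref{const-eq}, I would reduce the theorem to showing $\mathcal{N}_0\ge N$ almost surely; the statement for every $v\in\Z^d$ then follows by translation invariance of the i.i.d.\ environment together with a countable union over $v$. When $N=\infty$ the same scheme will be applied to arbitrary finite subfamilies, so I fix an integer $k\le N$ and aim to produce $k$ pairwise distinct elements of $\mathcal{I}(0)$ out of any pairwise distinct $\Gamma_1,\dots,\Gamma_k\in\mathcal{I}$.

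The construction is the standard limiting one. For each $i\in\{1,\dots,k\}$ I pick vertices $x_n^{(i)}\in\Gamma_i$ with $|x_n^{(i)}|_1\to\infty$, select optimal paths $\gamma_n^{(i)}\in\mathbb{O}(0,x_n^{(i)})$, and by a diagonal compactness extraction (only finitely many edges emanate from each vertex) obtain a subsequence along which every $\gamma_n^{(i)}$ converges to an infinite path $\Gamma_i'$ starting at $0$; passing optimality of finite initial segments to the limit yields $\Gamma_i'\in\mathcal{I}(0)$. The decisive step will be a \emph{coalescence lemma} asserting $\Gamma_i'\sim\Gamma_i$ for every $i$. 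Granted this, pairwise distinctness of $\Gamma_1',\dots,\Gamma_k'$ is automatic: if $\Gamma_i'\cap\Gamma_j'$ were infinite then, choosing common vertices beyond the points at which $\Gamma_i'$ has merged with $\Gamma_i$ and $\Gamma_j'$ with $\Gamma_j$, one would force $\Gamma_i\cap\Gamma_j$ to be infinite, contradicting $\Gamma_i\not\sim\Gamma_j$.

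To prove the coalescence lemma I would let $y_n^{(i)}$ be the first vertex of $\gamma_n^{(i)}$ lying on $\Gamma_i$; this is well defined since $x_n^{(i)}\in\gamma_n^{(i)}\cap\Gamma_i$. From $y_n^{(i)}$ to $x_n^{(i)}$ both the tail of $\gamma_n^{(i)}$ and the corresponding arc of $\Gamma_i$ are optimal paths, so one may replace the former by the latter without losing optimality, and can thus arrange that $\gamma_n^{(i)}$ follows $\Gamma_i$ from $y_n^{(i)}$ onward. It then suffices to extract a sub-subsequence along which $y_n^{(i)}$ stays bounded, for the limit $\Gamma_i'$ will then contain an infinite tail of $\Gamma_i$ and hence coalesce with it. Ruling out $|y_n^{(i)}|_1\to\infty$ is the main obstacle: it demands that an optimal path from $0$ to a far point on $\Gamma_i$ cannot systematically avoid a long initial segment of $\Gamma_i$. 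In the planar continuous setting of \cite{AH16} this is handled by uniqueness of optimizers and the counter-clockwise ordering of geodesics in $\Z^2$, neither of which is available here, so the argument must proceed by purely metric fluctuation estimates; the useful hypothesis together with $\E\exp(\alpha\tau_e)<\infty$ should supply Kesten-type concentration of passage times sufficient to quantify the cost of such a detour, and producing this general-dimensional, general-distribution version of the coalescing property is, as announced in the abstract, the paper's principal new input.
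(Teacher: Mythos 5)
You have set up the right reduction (it suffices to show that every infinite geodesic $\Gamma$ is coalesced with by some element of $\mathcal{I}(0)$, which is also how the paper proceeds, via its notion of ``bad'' vertices: $x\in\Gamma$ with $\mathbb{O}(0,x)\cap\Gamma=\{x\}$; your condition ``$y_n^{(i)}\to\infty$'' is exactly ``$\Gamma$ has infinitely many bad points''). But the proposal stops precisely at the theorem's actual content. Your coalescence lemma --- ruling out that the first hitting point of $\mathbb{O}(0,x_n)$ on $\Gamma$ escapes to infinity --- is asserted to follow from ``Kesten-type concentration of passage times'' supplied by the usefulness and exponential-moment hypotheses. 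That is not a proof, and it is not a plausible route: the known fluctuation/concentration bounds for $t(0,x)$ (order $\sqrt{|x|\log|x|}$ at best) are far too weak to control the geometry of geodesics in $d\ge 3$, which is exactly why this problem was open outside the planar continuous case. No quantitative detour-cost estimate of the kind you invoke is available.

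What the paper actually does at this point is qualitatively different: a resampling argument. Conditioning on the event that every geodesic extending a fixed finite path $\gamma_0$ has infinitely many bad points, it locates, in each window $(a_k,a_{k+1}]$ along such a geodesic, an edge $\eta$ of large weight ($\tau_\eta\ge 3M$, guaranteed in positive density by a van den Berg--Kesten-type lemma proved in the appendix) and resamples $\tau_\eta$ to be small. This forces $\eta$ to be ``pivotal'': all later bad points vanish while earlier ones persist, so the resampled configuration lies in the event $\{\delta a_{k-1}/2\le \mathcal{K}<\infty\}$. A counting bound on the number of pivotal edges then yields $\P(\delta a_{k-1}/2\le\mathcal{K}<\infty)\ge c\,\P(\mathcal{P})$ uniformly in $k$, and letting $k\to\infty$ gives $\P(\mathcal{P})=0$. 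Separately, note that your ``pairwise distinctness is automatic'' step and the whole framework of unique optimizers need modification for non-continuous $F$, where $\sim$ is not an equivalence relation and $\mathcal{N}_v$ is defined via maximal antichains; the paper devotes a separate subsection to this. As it stands, the proposal is a correct reduction with the central lemma unproved and the proposed mechanism for it unworkable.
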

\begin{remark}
  In the case $d=2$ with a continuous distribution, the above result was shown in \cite{AH16}.
\end{remark}
\begin{comment}
\begin{remark}
  One can prove \eqref{main-eq} not only on $\Z^d$-lattice but also for any infinite graphs with polynomial volume growth by exactly the same arguments.
\end{remark}
\begin{remark}
 The exponential moment condition in Theorem~\ref{main-thm} is used only in Appendix, where we discuss the large deviation for first passage times. It should be noted that this condition can be reduced to some polynomial moment condition.
\end{remark}
\end{comment}
\subsection{Notation and terminology}
This subsection collects some notations and terminologies for the proof.
\begin{itemize}
  \item Given a path $\gamma=(x_i)_{i=1}^l$, we set $\gamma[i]=x_i$.
\item Given two paths $\gamma_1=(\gamma_1[i])_{i=1}^{l}$ and $\gamma_2=(\gamma_2[i])_{i=1}^{l'}$ with $\gamma_1[l]=\gamma_2[1]$, we denote the concatenated path by $\gamma_1\oplus\gamma_2$, i.e. $\gamma_1\oplus\gamma_2=(\gamma_1[1]\cdots,\gamma_1[l],\gamma_2[1],\cdots,\gamma_2[l'])$.
\item Given two paths $\gamma=(y_i)^l_{i=1}$ and $\Gamma=(x_i)^{L}_{i=1}$, we write $\gamma\sqsubset\Gamma$ if there exists $k$ such that $y_{i}=x_{k+i}$ for any $i\in\{1\cdots,l\}$. Then we say that $\gamma$ is a sub--path of $\Gamma.$
\item Given $x,y\in\R^d$, we define $d_{\infty}(x,y)=\max\{|x_i-y_i|~i=1,\cdots,d\}$. It is useful to extend the definition as
  $$d_{\infty}(A,B)=\inf\{d_{\infty}(x,y)|~x\in A,~y\in B\}\text{\hspace{4mm}for $A,B\subset \R^d$}.$$
  When $A=\{x\}$, we write $d_{\infty}(x,B)$.
  \item Given $x\in\R$, we denote by $\lfloor x\rfloor$ the greatest integer less than or equal to $x$.
\item Given a set $D\subset\Z^d$, let us define the outer boundary of $D$ as
  $$\partial^+ D=\{v\notin D|~\exists w\in D\text{ such that }|v-w|_1=1\}.$$
\item Let $F^-$ and $F^+$ be the infimum and supremum of the support of $F$, respectively:
  $$F^-=\inf\{\delta\ge 0|~\P(\tau_e<\delta)>0\},~F^+=\sup\{\delta\ge 0|~\P(\tau_e>\delta)>0\},$$
  where if $F$ is unbounded distribution, then we set $F^+=\infty$.
\item Given a finite path $\gamma_0$ starting at $v$, we define $\I(\gamma_0)=\{\Gamma\in\I(v)|~\gamma_0\sqsubset\Gamma\}$.
  \item Given $M\in\N$, let $\mathbb{T}_M$ be the set of all paths whose length is
$M$.
\end{itemize}
\section{Proof}
\begin{figure}[b]
  \includegraphics[width=4.0cm]{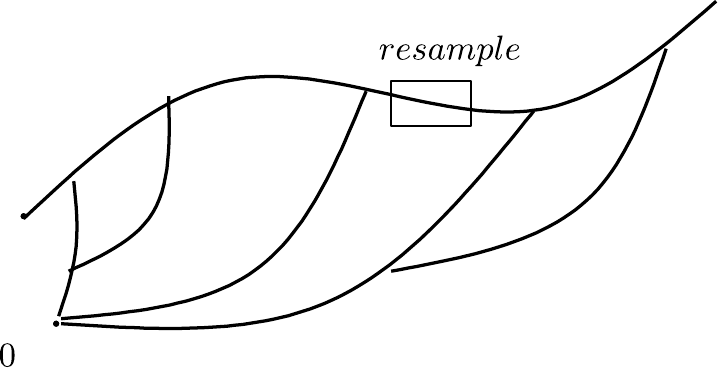}
  \hspace{8mm}
  \includegraphics[width=4.0cm]{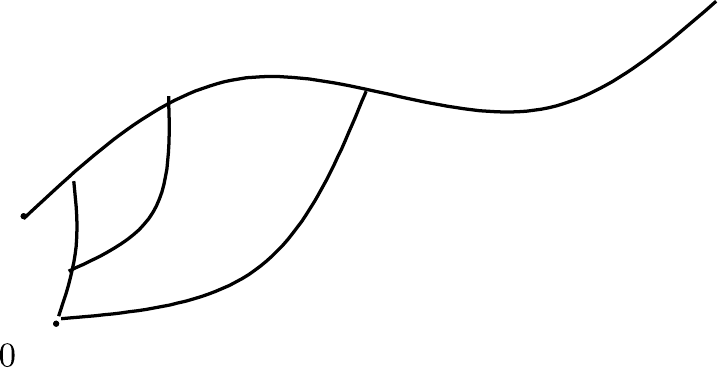}
\caption{}
Left: We resample the configurations on the third sub-path.\\
Right: After resampling, $4$-th and the subsequent bad points vanish.\\
  \label{fig:two}
\end{figure}
\subsection{Heuristic}
We will explain the heuristic behind the proof of $\mathcal{N}=\mathcal{N}_0$ in this subsection. Let $\Gamma$ be an infinite geodesic originating from some vertex with which all infinite geodesics originating from the origin do not coalesce. Then one can construct infinitely many optimal paths from the origin intersecting $\Gamma$ at only one point. We call the intersecting points bad points and the sub-path of $\Gamma$ between $k$-th bad point and $k+1$-th bad point the $k$-th sub-path. The crucial observation is the following: when we resample some configurations on the $k$-th sub-path and lower the passage time, the $k+1$-st and the subsequent bad points vanish and the $k$-th and the prior bad points remain. From this observation, one can expect 
$$\P(\text{$\Gamma$ has $k$ bad points})\geq c\P(\text{$\Gamma$ has infinitely many bad points}),$$
with some constant $c>0$. Here $c$ represents the cost for lowering the passage time. Since the events $\{\text{$\Gamma$ has $k$ bad points}\}_{k\in\N}$ are disjoint, summing up with respect to $k$, we have that for any $K\in\N$
$$1\geq \sum_{k=1}^K\P(\text{$\Gamma$ has $k$ bad points})\geq cK\P(\text{$\Gamma$ has infinitely bad points}).$$
Letting $K$ goes to infinity, we have
$$\P(\text{$\Gamma$ has infinitely many bad points})=0.$$
This implies that there exists $\Gamma_0\in\mathcal{I}(0)$ such that $\Gamma_0\sim\Gamma$ almost surely. We obatin $\mathcal{N}_0=N$. \\

There are mainly two obstacles to put the above argument into practice. First, to lower the passage times, each $k$-th paths needs to have sufficiently large passage time before resampling. Second, we need to take $\Gamma=\Gamma(\tau)$ depending on configurations. Then when we resample them, $\Gamma$ might change, i.e., $\Gamma(\tilde{\tau})\neq \Gamma(\tau)$ where $\tilde{\tau}=\{\tilde{\tau}_e\}_{e\in E^d}$ is resampled configurations. Therefore, the above heuristics does not work straightforwardly.
  \subsection{Proof for continuous distributions with unbounded support}\label{UBC}
  In this subsection, suppose that $F$ is continuous and $F^+=\infty$. Recall that we denote by $\OO(v,w)$ the unique optimal path between $v$ and $w$. It suffices to show that $\P(\mathcal{N}_0=N)=1.$
  \begin{Def}\label{badness}
    In this definition, we consider a path as a subset of vertices.
    Given an infinite path $\Gamma$ and a vertex $x\in\Gamma$, we say that $x$ is bad for $\Gamma$ if
    $$\mathbb{O}(0,x)\cap \Gamma=\{x\}.$$
    Otherwise, we say that $x$ is good for $\Gamma$.
      \end{Def}
  \begin{Def}
    Given an infinite path $\Gamma$, we say that $\Gamma$ is bad if
    $$\sharp \{i\in\N|~\Gamma[i]\text{ is bad for $\Gamma$ }\}=\infty.$$
Otherwise,  we say that $\Gamma$ is good.
  \end{Def}
   \begin{lem}\label{goodness}
If $\Gamma$ is good, then there exists an infinite geodesic $\Gamma_0\in\I(0)$ such that $\Gamma\sim \Gamma_0$. 
  \end{lem}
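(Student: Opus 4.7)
The plan is to locate a single bad vertex $\Gamma[m_0]$ that is visited by infinitely many of the (unique) optimal paths $\pi_i := \mathbb{O}(0,\Gamma[i])$, and then to splice $\pi_{\Gamma[m_0]}$ with the tail of $\Gamma$ beyond $\Gamma[m_0]$ to obtain the desired $\Gamma_0$.

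First I set $K := \max\{i\ge 1 : \Gamma[i]\text{ is bad for }\Gamma\}$, which is finite because $\Gamma$ is good. For each $i>K$ the vertex $\Gamma[i]$ is good, so $\pi_i \cap \Gamma$ contains some vertex other than $\Gamma[i]$. Let $\Gamma[m(i)]$ denote the vertex of $\pi_i\cap\Gamma$ lying closest to $0$ along $\pi_i$. Because optimal paths between fixed endpoints are unique under a continuous distribution, the initial segment of $\pi_i$ running from $0$ to $\Gamma[m(i)]$ agrees with $\pi_{\Gamma[m(i)]}$; by minimality of $m(i)$ this segment meets $\Gamma$ only at its endpoint $\Gamma[m(i)]$, which shows that $\Gamma[m(i)]$ is itself bad, hence $m(i)\le K$.

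Since only finitely many values of $m(i)$ are possible, the pigeonhole principle yields some $m_0\le K$ with $m(i)=m_0$ for infinitely many $i>K$; denote this set of indices by $I$. For each $i\in I$ the portion of $\pi_i$ from $\Gamma[m_0]$ to $\Gamma[i]$ is an optimal path between these two vertices; the sub-path of $\Gamma$ from $\Gamma[m_0]$ to $\Gamma[i]$ is also optimal, so uniqueness forces the two to coincide. In particular $\pi_i$ equals $\pi_{\Gamma[m_0]}$ concatenated with $(\Gamma[m_0],\Gamma[m_0+1],\ldots,\Gamma[i])$ for every $i\in I$. I then define $\Gamma_0 := \pi_{\Gamma[m_0]} \oplus (\Gamma[m_0],\Gamma[m_0+1],\ldots)$. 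Any finite prefix of $\Gamma_0$ is a sub-path of $\pi_i$ for some $i\in I$ (since $I$ is unbounded), hence optimal; the identity $t(0,x_b)=t(0,x_a)+t(x_a,x_b)$, valid whenever $x_a$ lies on an optimal path from $0$ to $x_b$, then promotes optimality of all prefixes to optimality of all sub-paths, giving $\Gamma_0\in\mathcal{I}(0)$. Because $\Gamma_0$ contains the whole infinite tail $(\Gamma[m_0],\Gamma[m_0+1],\ldots)$ of $\Gamma$, we conclude $\Gamma_0\sim\Gamma$.

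The delicate step is the double application of uniqueness of optimal paths: first to identify the initial segment of $\pi_i$ with $\pi_{\Gamma[m(i)]}$ (which forces $\Gamma[m(i)]$ to be bad and hence $m(i)\le K$), and second to force the tail of $\pi_i$ past $\Gamma[m_0]$ to coincide with $\Gamma$ exactly. Everything else---finiteness of $K$, the pigeonhole reduction, and the upgrade from optimal prefixes to optimal sub-paths---is routine once uniqueness is in hand.
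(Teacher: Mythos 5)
Your proof is correct and follows essentially the same route as the paper: both splice the unique optimal path to a bad vertex of $\Gamma$ with the tail of $\Gamma$ beyond it, using uniqueness of optimal paths to show that $\OO(0,\Gamma[i])$ passes through that vertex and then tracks $\Gamma$. The only cosmetic difference is that you extract the splice point $\Gamma[m_0]$ by pigeonhole over the finitely many bad indices (taking the first intersection along $\pi_i$), whereas the paper works directly with the last bad vertex $\Gamma[m]$ and shows that every $\OO(0,\Gamma[l])$ with $l\ge m$ passes through it.
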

\begin{proof}
  Let $m=\max\{i\in\N|~\text{$\Gamma[i]$ is bad for $\Gamma$}\}$. It suffices to prove that
$\OO(0,\Gamma[m])\oplus (\Gamma[i])^{\infty}_{i=m}$ is an infinite geodesic.
We take $l\geq m$. Let $k=\min\{i\in\N|~\Gamma[i]\in\OO(0,\Gamma[l])\cap \Gamma\}.$ By the definition of $m$, we have $k\leq m$. Since $\OO(\Gamma[k],\Gamma[l])=(\Gamma[i])^l_{i=k}$, we have $\Gamma[m]\in\OO(0,\Gamma[l])$ and 
$$\OO(0,\Gamma[m])\oplus (\Gamma[i])^l_{i=m}=\OO(0,\Gamma[l]).$$
Since any sub-path of an optimal path is also an optimal path, we have that $\OO(0,\Gamma[m])\oplus (\Gamma[i])^{\infty}_{i=m}$ is an infinite geodesic.
\end{proof}
\begin{lem}\label{finite-abs}
If $\mathcal{N}_0<\mathcal{N}$, then there exists a finite path $\gamma_0=(\gamma_0[i])^l_{i=1}$ such that $\I(\gamma_0)$ is non-empty and for any $\Gamma'\in\I(\gamma_0)$, $\Gamma'$ is bad.
\end{lem}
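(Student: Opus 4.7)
The plan is to pick a representative of a bad equivalence class and take $\gamma_0$ to be a long initial segment of it; ruling out good extensions is then done by contradiction using K\"onig-type compactness together with the construction from the proof of Lemma~\ref{goodness}.

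First, from $\mathcal{N}_0 < \mathcal{N}$ there is an equivalence class $C$ under $\sim$ with $C \cap \mathcal{I}(0) = \emptyset$; the contrapositive of Lemma~\ref{goodness} forces every $\Gamma \in C$ to be bad. Fix any such $\Gamma$ and set $v = \Gamma[1]$. I propose the candidate $\gamma_0^{l} := (\Gamma[1],\ldots,\Gamma[l])$, which satisfies $\Gamma \in \mathcal{I}(\gamma_0^{l})$ and in particular $\mathcal{I}(\gamma_0^{l}) \neq \emptyset$. The claim will be that for $l$ large enough every element of $\mathcal{I}(\gamma_0^{l})$ is bad.

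To prove this, suppose for contradiction that for every $l$ there exists a good extension $\Gamma^{(l)} \in \mathcal{I}(\gamma_0^{l})$. The proof of Lemma~\ref{goodness} then supplies an explicit $\Gamma_0^{(l)} := \OO(0,\Gamma^{(l)}[m_l']) \oplus (\Gamma^{(l)}[i])_{i \geq m_l'} \in \mathcal{I}(0)$ coalescing with $\Gamma^{(l)}$, where $m_l'$ is the last index of a bad point of $\Gamma^{(l)}$. Local finiteness of $\Z^d$ together with continuity of $F$ (so that $\mathcal{I}(0)$ is closed under pointwise limits by uniqueness of optimal paths) lets me extract a subsequence $\Gamma_0^{(l_k)} \to \Gamma_0^{\infty} \in \mathcal{I}(0)$. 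The target is $\Gamma_0^{\infty} \sim \Gamma$, for then $\Gamma_0^{\infty} \in \mathcal{I}(0) \cap C$, contradicting the choice of $C$.

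The main obstacle is precisely to establish this limit coalescence. Tracking the structure of $\Gamma^{(l)}$: it follows $\Gamma$ up to some index $m_l \geq l$ and diverges afterwards; after its last bad index $m_l'$, the path $\Gamma_0^{(l)}$ coincides with $\Gamma^{(l)}|_{\geq m_l'}$, so when $m_l' \leq m_l$ the tail of $\Gamma_0^{(l)}$ travels along $\Gamma[m_l'],\Gamma[m_l'+1],\ldots,\Gamma[m_l]$ before following $\Gamma^{(l)}$'s divergent branch. The key input is that $\Gamma$ has infinitely many bad points $\Gamma[i_j]$: for such an $i_j \leq l$ the point $\Gamma[i_j]$ is bad for $\Gamma^{(l)}$ unless $\OO(0,\Gamma[i_j])$ meets the divergent tail of $\Gamma^{(l)}$. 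Since only finitely many points are allowed to remain bad for the good path $\Gamma^{(l)}$, this forces $m_l'$ to accumulate many bad-for-$\Gamma$ indices, hence to be large relative to the joining point; a case analysis then shows that along a subsequence $\Gamma_0^{(l_k)}$ either visits $\Gamma[i]$ for every sufficiently large $i$ or the meeting index remains bounded, and in either scenario the pointwise limit $\Gamma_0^{\infty}$ shares an infinite tail with $\Gamma$. Executing this case analysis cleanly while keeping the subsequence well-defined is the delicate step where the continuity of $F$ (uniqueness of optimal paths and hence of the common tails of coalescing geodesics) is crucially used.
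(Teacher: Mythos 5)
Your setup --- extracting a coalescence class of bad geodesics disjoint from $\I(0)$ via the contrapositive of Lemma~\ref{goodness} and taking $\gamma_0=(\Gamma[i])_{i=1}^{\ell}$ for a representative $\Gamma$ --- is exactly the paper's, but the step that actually closes the proof is missing, and the compactness argument you substitute for it cannot be completed. The paper's key observation is that $\mathcal{N}_0<\mathcal{N}$ forces $\mathcal{N}_0<\infty$, and that under a continuous $F$ two coalescing geodesics started at the same vertex are \emph{identical} (their segments up to any common vertex are both the unique optimal path between the endpoints). Hence, by Lemma~\ref{goodness} and transitivity of $\sim$, there are at most $\mathcal{N}_0$ good elements of $\I(v)$, none of which coalesces with $\Gamma$; each of these finitely many geodesics therefore meets $\Gamma$ in only finitely many vertices, and one simply chooses $\ell$ beyond all of these intersections. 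Your proposal never invokes the finiteness of $\mathcal{N}_0$, which is the entire content of the hypothesis --- that alone should signal that something essential is absent.

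The limiting argument does not supply a substitute. The coalescence of $\Gamma_0^{(l)}$ with $\Gamma^{(l)}$ takes place only beyond the last bad index $m_l'$ of $\Gamma^{(l)}$, and (as your own ``key input'' essentially shows, since every index that is bad for $\Gamma$ below $l$ remains bad for $\Gamma^{(l)}$) $m_l'\to\infty$; a pointwise limit retains only initial-segment information and hence no trace of a coalescence occurring at diverging indices. Worse, the initial segment of $\Gamma_0^{(l)}$ is $\OO(0,\Gamma^{(l)}[m_l'])$, and since $\Gamma^{(l)}[m_l']$ is bad for $\Gamma^{(l)}$ this path is disjoint from $\{\Gamma[1],\dots,\Gamma[l]\}$ except possibly at its terminal vertex. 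So the finite prefixes that survive into $\Gamma_0^{\infty}$ systematically \emph{avoid} $\Gamma$; nothing in the construction forces $\Gamma_0^{\infty}$ to meet $\Gamma$ at even one vertex, let alone share an infinite tail with it. The ``case analysis'' you defer is therefore not a technical detail to be cleaned up but the unfillable heart of the argument; it should be replaced by the finite-count argument above.
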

\begin{proof}
  Since $\mathcal{N}_0<N$, there exists a bad infinite geodesic $\Gamma\in\I$. Note that $$\sharp \{\Gamma'\in\I(\Gamma[1])|~\Gamma'\text{ is good }\}\leq \mathcal{N}_0<N.$$
  Therefore, there exists $\ell\in\N$ such that for any $\Gamma'\in\I((\Gamma[i])^{\ell}_{i=1})$, $\Gamma'$ is bad.
\end{proof}
  This lemma yields
 \begin{equation*}
   \begin{split}
     \P(\I(0)<\mathcal{N})\le\sum_{\gamma_0}\P(\text{$\I(\gamma_0)$ is non-empty and $\forall \Gamma\in\I(\gamma_0)$, $\Gamma$ is bad}),
      \end{split}
 \end{equation*}
 where the summation is taken over all finite path. We fix a finite path $\gamma_0$ and set $v=\gamma_0[1]$. Let us define the event $\mathcal{A}$ as
 $$\mathcal{A}=\{\text{$\I(\gamma_0)$ is non-empty and $\forall \Gamma\in\I(\gamma_0)$ , $\Gamma$ is bad}\}.$$
 We will prove that for any finite path $\gamma_0$, $\P(\mathcal{A})=0$.\\
 
 Let $\e,M,L,\delta>0$.  We define the event $\mathcal{B}$ as
 \begin{equation}
   \mathcal{B}=\{t(0,v)\le M\}.
 \end{equation}Then if we take $M>0$ sufficiently large depending on $\e$, we get
  \begin{equation}\label{estimate-B}
 \P(\mathcal{B})\ge 1-\e/4.
 \end{equation}
 
  \begin{Def}
 Given $a,b\in\Z^d$, $(a,b)$ is said to be black if
   \begin{equation}
   \begin{cases}
     \begin{array}{l}
       |a-b|_1\ge \delta \sharp\OO(a,b),\\
       t(a,b)\geq \delta \sharp \OO(a,b),\\
       \sharp \{e\in \OO(a,b)|~\tau_{e}\ge 3M\}\ge  \delta \sharp \OO(a,b).\\
       \end{array}
   \end{cases}
   \end{equation}
 \end{Def}
 
  \begin{lem}\label{exp-decay}
    For any $M>0$, there exist $c_1,c_2>0$ and $\delta>0$ such that for any $k\in\N$,
    $$\P\left(\forall a,b\in[-k,k]^d,
    \begin{cases}
      (a,b)\text{ is black } &\text{ if $|a-b|_1\ge \sqrt{k}$}\\
    \sharp\OO(a,b)\leq k/2 &\text{otherwise}
      \end{cases}
\right)\le c_1\exp{(-c_2\sqrt{k})}.$$
  \end{lem}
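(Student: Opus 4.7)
The plan is to bound the probability in the lemma by the probability that one well-chosen far pair is black, and then to show that blackness of such a pair forces an upper-tail large-deviation event for the first-passage time.

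By monotonicity of probability, for any fixed $(a,b)\in[-k,k]^d$ with $|a-b|_1\ge\sqrt{k}$ we have
$$\P(\text{the event in the lemma})\le\P\bigl((a,b)\text{ is black}\bigr).$$
I would take $a=0$ and $b=(\lceil\sqrt{k}\rceil,0,\ldots,0)$, which lies in $[-k,k]^d$ for $k$ large enough and satisfies $|a-b|_1\ge\sqrt{k}$. This reduces the ``$\forall a,b$'' quantifier to a single pair.

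The crucial observation is that the third defining property of blackness, combined with the trivial bound $\sharp\OO(a,b)\ge|a-b|_1$, yields
$$t(a,b)\ge 3M\cdot\sharp\{e\in\OO(a,b):\tau_e\ge 3M\}\ge 3M\delta\,\sharp\OO(a,b)\ge 3M\delta\,|a-b|_1.$$
Let $\mu=\mu(e_1)$ denote the time constant $\lim_{n\to\infty}t(0,ne_1)/n$, which is finite under the exponential moment hypothesis. If $\delta$ is chosen so that $3M\delta>\mu$, then blackness of $(0,b)$ forces $t(0,ne_1)\ge 3M\delta\,n$ with $n=\lceil\sqrt{k}\rceil$, a genuine upper-tail event. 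The standard large-deviation bound in first-passage percolation (a consequence of the exponential moment hypothesis, and presumably the content of the appendix alluded to in the introduction),
$$\P\bigl(t(0,ne_1)\ge(\mu+\eta)n\bigr)\le C_1\exp(-C_2(\eta)\,n)\qquad(\eta>0),$$
applied with $\eta=3M\delta-\mu$ then gives $\P((a,b)\text{ is black})\le c_1\exp(-c_2\sqrt{k})$.

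The choice of $\delta$ requires splitting on $M$. If $3M>\mu$, I would pick $\delta\in(\mu/(3M),1)$ and apply the LDP above. If $3M\le\mu$, the first condition in the definition of black, $|a-b|_1\ge\delta\,\sharp\OO(a,b)\ge\delta|a-b|_1$, forces $\delta\le 1$; in this regime I would simply pick any $\delta>1$, making ``$(a,b)$ is black'' empty for every pair so that the stated probability is trivially zero. The main obstacle is the quantitative upper-tail LDP in the first case: this is precisely where the exponential moment hypothesis of Theorem~\ref{main-thm} is used. A secondary point is that only a single pair is needed for the exponential bound, so the ``for all $a,b$'' quantifier is controlled by its weakest (shortest) member of length $\lceil\sqrt k\rceil$, which is exactly what produces the $\sqrt k$ rate.
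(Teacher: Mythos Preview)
The lemma statement contains a typo: it should bound the probability of the \emph{complement} of the displayed event, not the event itself. This is clear from how the lemma is used immediately afterward (to conclude $\P(\mathcal{C})\ge 1-\e/4$) and from the proof in the Appendix, which begins by estimating $\P(\mathcal{C}^c)$ via a union bound.

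Your argument is a valid proof of the literal (typo'd) statement: by choosing $\delta$ close to $1$ (or $>1$), you make blackness a large-deviation event (or outright impossible), so the probability that even a single far pair is black decays exponentially. But this choice of $\delta$ is exactly the opposite of what the paper needs. In the paper's argument $\delta$ must be \emph{small} so that blackness is the \emph{typical} behaviour; the three defining inequalities are meant to be high-probability lower bounds along the geodesic, and the lemma (as intended) asserts that \emph{failure} of any of them is exponentially rare.

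The paper's proof of the intended statement proceeds by a union bound over the $O(k^{2d})$ pairs $(a,b)\in[-k,k]^d$, and for each pair invokes three separate large-deviation inputs proved as auxiliary lemmas: an upper bound on $\sharp\OO(a,b)$ in terms of $|a-b|_1$ (Kesten's self-avoiding-path estimate plus a passage-time upper tail), a lower bound on $t(a,b)$, and a lower bound on the number of edges with $\tau_e\ge 3M$ along $\OO(a,b)$ (via the modified-weight comparison argument of \cite{BK93}). Each of these holds with $\delta$ small, and each failure has probability $\le c_1 e^{-c_2|a-b|_1}$; since $|a-b|_1\ge\sqrt{k}$ for the far pairs, the polynomial cost of the union bound is absorbed into the exponent. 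Thus the direction of the argument is opposite to yours: one shows that \emph{not black} is atypical, not that \emph{black} is atypical.
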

  We postpone the proof until Appendix. The condition for $|a-b|_1<\sqrt{k}$ is necessary to restrict our attention to optimal paths whose length is sufficiently large, in order to use the condition that $(a,b)$ is black. We define the event $\mathcal{C}$ as
  
 $$\mathcal{C}=\left\{\forall k\ge L,~\forall a,b\in[-k,k]^d,
    \begin{cases}
      (a,b)\text{ is black } &\text{ if $|a-b|_1\ge \sqrt{k}$}\\
    \sharp\OO(a,b)\leq \delta k &\text{otherwise.}
      \end{cases}
\right\}$$

By Lemma~\ref{exp-decay}, we have the following lemma:
\begin{lem}
  For any $\e>0$ and $M>0$, there exist $\delta,L>0$ such that
 \begin{equation}\label{estimate-C}
   \begin{split}
     \P(\mathcal{C})\ge 1-\e/4
      \end{split}
 \end{equation}
 \end{lem}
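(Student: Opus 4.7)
The plan is a direct union bound over the scales $k\ge L$, with Lemma~\ref{exp-decay} supplying the per-scale input. For each $k\in\N$ let $\mathcal{C}_k$ denote the event appearing inside the probability in Lemma~\ref{exp-decay}, so that $\mathcal{C}=\bigcap_{k\ge L}\mathcal{C}_k$ once the $\delta$ in the definitions of blackness and of $\mathcal{C}$ is identified with the one produced by Lemma~\ref{exp-decay} for the fixed $M$. The small mismatch between the second case $\sharp\OO(a,b)\le k/2$ in the lemma and $\sharp\OO(a,b)\le \delta k$ in $\mathcal{C}$ is harmless: taking $\delta\ge 1/2$ makes the lemma's bound automatically stronger, while if one prefers to keep $\delta$ small for the later arguments, the Appendix proof of Lemma~\ref{exp-decay} adapts without change to give the bound $\delta k$ in place of $k/2$.

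Reading Lemma~\ref{exp-decay} in the natural direction as $\P(\mathcal{C}_k^c)\le c_1\exp(-c_2\sqrt{k})$ and applying a union bound yields
\begin{equation*}
\P(\mathcal{C}^c)=\P\Bigl(\bigcup_{k\ge L}\mathcal{C}_k^c\Bigr)\le \sum_{k\ge L}c_1\exp(-c_2\sqrt{k}).
\end{equation*}
Since $\sum_{k\ge 1}\exp(-c_2\sqrt{k})<\infty$, this tail tends to zero as $L\to\infty$. Hence, after fixing $\delta$ from Lemma~\ref{exp-decay} applied with the given $M$, one may choose $L=L(\e,c_1,c_2)$ large enough that $\sum_{k\ge L}c_1\exp(-c_2\sqrt{k})\le \e/4$, and conclude $\P(\mathcal{C})\ge 1-\e/4$, as required.

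The only substantive obstacle is already absorbed into Lemma~\ref{exp-decay}, which bundles the required large-deviation estimates for the length of optimal paths, for their total passage time, and for the density of heavy-weight edges along them, and which rests on the exponential moment hypothesis of Theorem~\ref{main-thm}. The present step---passing from a per-scale stretched-exponential tail to a simultaneous statement over $k\ge L$---is a standard Borel--Cantelli-style aggregation; the only mild care needed is the order of quantifiers, namely to select $\delta$ first (from Lemma~\ref{exp-decay} for the fixed $M$) and then $L$, so that a single value of $\delta$ works uniformly in $k$.
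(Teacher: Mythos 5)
Your proof is correct and is essentially the paper's own argument: the paper derives this lemma from Lemma~\ref{exp-decay} by precisely the union bound over $k\ge L$ together with summability of $c_1e^{-c_2\sqrt{k}}$, with $\delta$ fixed first (for the given $M$) and then $L$ chosen large. Your reading of Lemma~\ref{exp-decay} as a bound on the complement event, and your handling of the $k/2$ versus $\delta k$ mismatch, are both consistent with the appendix proof.
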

   On the event $\mathcal{A}$, we take $\tilde{\Gamma}\in\I(\gamma_0)$ such that $\tilde{\Gamma}$ is bad with a deterministic rule.  We define the event $\mathcal{D}(a_1,\cdots,a_k)$ as
  $$\mathcal{D}(a_1,\cdots,a_k)=\{
\text{$\forall 1\le j\le k,~\exists i\in(a_{j-1},a_{j}]$ such that $\tilde{\Gamma}[i]$ is bad~for $\tilde{\Gamma}$}
  \}\cap \mathcal{A},$$
  with the convention that $a_0=1$. Note that $\mathcal{D}(a_1,\cdots,a_{k+1})\subset \mathcal{D}(a_1,\cdots,a_k)$ and $$\lim_{a_{k+1}\to\infty}\P(\mathcal{D}(a_1,\cdots,a_{k+1}))=\P(\mathcal{D}(a_1,\cdots,a_k)).$$
  Then we define the sequence $\{a_k\}_{k\in\N}$ inductively as follows:
  Let $a_1>2(\delta^{-1}M+L+|v|_1+1)$ be $\P(\mathcal{A}\backslash \mathcal{D}(a_1))<\e/4$. Suppose that we have defined $\{a_j\}^k_{j=1}$. We set $a_{k+1}$ such that $a_{k+1}>a_{k}$ and  
  $$\P(\mathcal{D}(a_1,\cdots,a_{k})\backslash \mathcal{D}(a_1,\cdots,a_{k+1}))<\e/2^{k+2}.$$
  We define
  \begin{equation*}
    \begin{split}
      \mathcal{D}=\left\{
  \exists \Gamma\in\I(\gamma_0) \text{ s.t. $\forall j\in\N$,  $\exists i\in(a_j,a_{j+1}]$ s.t. $\Gamma[i]$ is bad for $\Gamma$}
  \right\}.
    \end{split}
  \end{equation*}
  Note that $$\mathcal{D}\supset \bigcap_{k\in\N}\mathcal{D}(a_1,\cdots,a_k).$$
 Thus, we have
 \begin{equation}\label{estimate-D}
   \P(\mathcal{A}\backslash\mathcal{D})<\e/4.
   \end{equation}
 We define $\mathcal{P}=\mathcal{A}\cap \mathcal{B}\cap \mathcal{D}\cap\mathcal{C}.$ By \eqref{estimate-B}, \eqref{estimate-C} and \eqref{estimate-D}, for any $\e>0$, there exist $M,L,\delta>0$ such that $$\P(\mathcal{A})\leq \P(\mathcal{P})+\e.$$
 \begin{prop}\label{estimate-P}
   For any $M,L,\delta>0$
    $$\P(\mathcal{P})=0.$$
  \end{prop}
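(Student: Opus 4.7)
The plan is to implement the Peierls-style resampling argument from the heuristic in Section~\ref{UBC}. On the event $\mathcal{P}$, fix by a deterministic rule a bad infinite geodesic $\tilde{\Gamma}\in\I(\gamma_0)$ together with an enumeration $b_1<b_2<\cdots$ of its bad indices with $b_j\in(a_j,a_{j+1}]$. Let $\sigma_k$ denote the sub-path of $\tilde{\Gamma}$ from $\tilde{\Gamma}[b_k]$ to $\tilde{\Gamma}[b_{k+1}]$; by event $\mathcal{C}$ and the lower bound imposed on $a_1$, each pair $(\tilde{\Gamma}[b_k],\tilde{\Gamma}[b_{k+1}])$ is black, so $\sigma_k$ is optimal and contains at least $\delta|\sigma_k|$ edges with weight $\ge 3M$.

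For each $k\in\N$ I would construct a resampling map $\Phi_k$ on $\mathcal{P}$ and an image event $\tilde{\mathcal{P}}_k\subset\Omega$ satisfying
$$\Phi_k(\mathcal{P})\subset\tilde{\mathcal{P}}_k,\qquad \P(\tilde{\mathcal{P}}_k)\ge c\,\P(\mathcal{P}),\qquad \tilde{\mathcal{P}}_k\cap\tilde{\mathcal{P}}_{k'}=\emptyset\ \text{for}\ k\neq k',$$
with a constant $c>0$ independent of $k$; granted these, the disjoint union bound yields $1\ge cK\,\P(\mathcal{P})$ for every $K$, forcing $\P(\mathcal{P})=0$. Concretely, $\Phi_k$ would replace a prescribed collection of heavy edges of $\sigma_k$ by independent draws from $F$ conditioned on $[0,\delta']$ for a small fixed $\delta'\in(0,M)$, and $\tilde{\mathcal{P}}_k$ would be the set of configurations whose canonical geodesic in $\I(\gamma_0)$ has exactly $k$ bad points, which makes disjointness automatic. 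The inclusion $\Phi_k(\mathcal{P})\subset\tilde{\mathcal{P}}_k$ is the geometric heart: for $j\le k$ the path $\OO(0,\tilde{\Gamma}[b_j])$ avoids every edge of $\tilde{\Gamma}$ by definition of badness, so it is unchanged by the resampling and $\tilde{\Gamma}[b_j]$ stays bad; for $j>k$ the reduction in $t(\sigma_k)$ forces the concatenation $\OO(0,\tilde{\Gamma}[b_k])\oplus\sigma_k\oplus(\tilde{\Gamma}[i])_{i\ge b_{k+1}}$ to become the optimal path from $0$ to every later point of $\tilde{\Gamma}$, using $\mathcal{B}$ to bound the start-up cost $t(0,v)\le M$, $\mathcal{C}$ to ensure $t(\sigma_k)$ was large to begin with, and continuity of $F$ to guarantee uniqueness of optimizers.

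The main obstacle is the uniform lower bound $\P(\tilde{\mathcal{P}}_k)\ge c\,\P(\mathcal{P})$. A naive resampling of all $|E_k|\ge\delta|\sigma_k|$ heavy edges of $\sigma_k$ incurs a Radon--Nikodym cost of $p^{|E_k|}$, where $p=\P(\tau_e\le\delta')>0$, and this is \emph{not} bounded below as $k\to\infty$. To obtain a $k$-independent $c$ one must argue that only a uniformly bounded number of heavy edges need to be altered: the ``gap'' $t(0,\tilde{\Gamma}[b_j])-t(0,\tilde{\Gamma}[b_k])-t((\tilde{\Gamma}[i])_{i=b_k}^{b_j})$ that must be closed is controlled uniformly in terms of $M$ via $\mathcal{B}$, whereas lowering even a single heavy edge from $[3M,\infty)$ to $[0,\delta')$ already saves at least $3M-\delta'>2M$ in $t(\sigma_k)$. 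Carrying this bookkeeping out rigorously, and handling the fact that $\tilde{\Gamma}$ itself may change after the resampling so that the sub-path identification must be preserved, is where the technical heart of the argument lies.
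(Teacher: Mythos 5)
Your overall strategy --- resample heavy edges on one inter-bad-point segment, pay only an $O(1)$ probabilistic cost because a single edge dropped from $[3M,\infty)$ to below $M$ already closes the gap $t(0,v)\le M$ guaranteed by $\mathcal{B}$ --- is exactly the mechanism of the paper (Lemma~\ref{cru} resamples one edge $\eta$ with $\tau_\eta\ge 3M$ to $\tau^*_\eta<M$, at cost $\P(\tau_e<M)$, uniformly in $k$). But the two points you defer as ``the technical heart'' are the entire content of the proof, and the packaging you propose for the conclusion does not survive them. First, your image events $\tilde{\mathcal{P}}_k=\{$the canonical geodesic in $\I(\gamma_0)$ has exactly $k$ bad points$\}$ cannot be shown to contain $\Phi_k(\mathcal{P})$: the event $\mathcal{P}$ constrains \emph{every} geodesic in $\I(\gamma_0)$, and after resampling you must control every geodesic in $\I^{(\eta)}(\gamma_0)$, including those avoiding $\eta$ entirely --- these are unchanged geodesics of the original configuration and hence still bad, so there is no control over which geodesic the ``canonical'' rule selects afterwards, nor over its exact number of bad points. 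Second, your claim that for $j\le k$ the path $\OO(0,\tilde\Gamma[b_j])$ ``is unchanged by the resampling'' only says its passage time is unchanged; lowering an edge on $\sigma_k$ could in principle create a strictly cheaper path to $\tilde\Gamma[b_j]$ that meets $\tilde\Gamma$ in several points, destroying badness of $b_j$. Ruling this out requires the contradiction argument of Step 2 of Lemma~\ref{cru} (a new optimum would have to use $\eta$, forcing all later points to be good, contradicting very-badness before resampling).

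The paper circumvents both problems by abandoning disjointness altogether. It introduces $\mathcal{S}(\Gamma)$ (last bad index), $\mathcal{R}$ and $\mathcal{K}$ (the minimal passage time to the last bad point, minimized over \emph{all} of $\I(\gamma_0)$), and the notion of $k$-pivotal edges. Steps 1--5 of Lemma~\ref{cru} show that the resampled configuration lands in $\{\delta a_{k-1}/2\le\mathcal{K}<\infty\}\cap\{\eta\text{ is }k\text{-pivotal}\}\cap\{\sharp\{k\text{-pivotal edges}\}\le 2\delta^{-1}a_{k+1}\}$; summing over $\eta$ and using blackness (at least $\delta(a_{k+1}-a_k)$ admissible heavy edges on the window) against the pivotal-edge upper bound gives Proposition~\ref{nonzero}: $\P(\delta a_{k-1}/2\le\mathcal{K}<\infty)\ge \frac{\delta^2}{4}\P(\mathcal{P})\P(\tau_e<M)$ for every $k$. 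Proposition~\ref{estimate-P} then follows in one line because $\{l\le\mathcal{K}<\infty\}$ decreases to the empty set as $l\to\infty$, so the left-hand side tends to $0$. If you want to complete your argument, you should replace your disjoint-events bookkeeping by such a quantitative ``last bad point is far and expensive'' statement, and supply the control of all (not just the canonical) geodesics after resampling; as written, the proposal is a restatement of the heuristic together with a correct diagnosis of its difficulties, not a proof.
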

 Since $\e>0$ is arbitrary, this proposition leads to $\P(\mathcal{A})=0$ and we conclude the proof. Before going into the proof of Proposition~\ref{estimate-P}, we prepare some definitions.
  \begin{Def}
  We say that $\Gamma\in\I(v)$ has $k$--step if
  there exists $ i\in(a_k,a_{k+1}]$ such that $\Gamma[i]$ is bad and for any $ i>a_{k+1}$, $\Gamma[i]$ is good.
  \end{Def}
  \begin{Def}
    An edge $e\in E^d$ is said to be $k$-pivotal if 
      there exist $\Gamma\in\I(\gamma_0)$ and $i\in(a_k,a_{k+1}]$ such that $e=\{\Gamma[i-1],\Gamma[i]\}$ and for any $\Gamma'\in\I(\gamma_0)$ satisfying that $e\notin \Gamma'$ and $\Gamma'$ is good, there exists $j\in\N$ such that for any $ m\ge j$, $e\in \OO(0,\Gamma'[m])$. 
      \end{Def}

  \begin{Def}
    $\Gamma\in\I(v)$ is said to be very bad if for any $k\in\N,$ there exists $i\in(a_k,a_{k+1}]$ such that $\Gamma[i]$ is bad.
  \end{Def}
  \begin{Def}
    Given $\Gamma\in\I(v)$, let $\mathcal{S}(\Gamma)=\sup\{i\in\N|~\text{$\Gamma[i]$ is bad}\}$. If $\Gamma$ is bad, then we set $\mathcal{S}(\Gamma)=\infty$. Let  $\mathcal{R}=\inf\{\mathcal{S}(\Gamma)|~\Gamma\in\I(\gamma_0)\}$ and $\mathcal{K}=\inf \{t(0,x_{\mathcal{R}})|~\Gamma\in\I(\gamma_0)\text{ with }\mathcal{R}=\mathcal{S}(\Gamma)\}.$
    \end{Def}
  \begin{prop}\label{nonzero}
    For any $k\in\N$,
    $$\P(\delta a_{k-1}/2\leq \mathcal{K}<\infty)\ge \frac{\delta^2}{4}\P(\mathcal{P})\P(\tau_e<M).$$
  \end{prop}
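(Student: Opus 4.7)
The plan is to implement the resampling heuristic sketched in Section 2.1. Starting from a configuration in $\mathcal{P}$, I would modify the weights on a carefully selected edge (or collection of edges) along the very bad geodesic $\tilde{\Gamma}$ so that the bad vertices of $\tilde{\Gamma}$ past a certain index disappear, placing the modified configuration in $\{\delta a_{k-1}/2\le \mathcal{K}<\infty\}$; the proposition then follows by quantifying the probabilistic cost of the modification.

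First (locating bad points and invoking $\mathcal{C}$): on $\mathcal{P}$, the deterministic rule selects $\tilde{\Gamma}\in \I(\gamma_0)$, and by $\mathcal{D}$ there exist indices $b_j\in(a_{j-1},a_j]$ with $\tilde{\Gamma}[b_j]$ bad for each $j$. Combining the bound $t(0,v)\le M$ from $\mathcal{B}$ with the control in $\mathcal{C}$, the vertices $\tilde{\Gamma}[b_{k-1}]$, $\tilde{\Gamma}[b_k]$, $\tilde{\Gamma}[b_{k+1}]$ lie in cubes $[-K,K]^{d}$ with $K=O(a_{k+1}/\delta)$, so the blackness conclusion of $\mathcal{C}$ applies to $(0,\tilde{\Gamma}[b_{k-1}])$ and to $(\tilde{\Gamma}[b_k],\tilde{\Gamma}[b_{k+1}])$. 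The first yields $t(0,\tilde{\Gamma}[b_{k-1}])\ge \delta|\tilde{\Gamma}[b_{k-1}]|_1\ge \delta a_{k-1}/2$, where the final inequality uses the lower bound $a_1>2(\delta^{-1}M+L+|v|_1+1)$ baked into the construction of $\{a_j\}$. The second tells us that the sub-path $\sigma\sqsubset\tilde{\Gamma}$ joining $\tilde{\Gamma}[b_k]$ to $\tilde{\Gamma}[b_{k+1}]$ contains at least $\delta\sharp\sigma$ edges of weight $\ge 3M$.

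Second (surgery and its consequences): by a fixed rule pick a distinguished heavy edge $e^{*}$ on $\sigma$ and resample $\tau_{e^{*}}$ conditional on $\{\tau_{e^{*}}<M\}$, keeping every other weight intact. The induced passage-time drop along $\sigma$, combined with $t(0,v)\le M$, is designed to force every geodesic $\OO(0,\tilde{\Gamma}[j])$ with $j$ sufficiently large to coincide with $\tilde{\Gamma}$ through $\tilde{\Gamma}[b_k]$ and then traverse $\sigma$; shortcut paths bypassing $\sigma$ are excluded using the residual blackness of $\sigma$ together with $t(0,v)\le M$. Consequently $\tilde{\Gamma}[j]$ becomes good for every large $j$, giving $\mathcal{S}(\tilde{\Gamma})\le b_k<\infty$ and hence $\mathcal{R}\le b_k$ under the new configuration. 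Since the weights on $\OO(0,\tilde{\Gamma}[b_{k-1}])$ are untouched by the surgery, $\tilde{\Gamma}[b_{k-1}]$ remains bad, and any geodesic in $\I(\gamma_0)$ achieving $\mathcal{R}$ has its last bad vertex at passage-time at least $t(0,\tilde{\Gamma}[b_{k-1}])\ge \delta a_{k-1}/2$, giving the desired lower bound on $\mathcal{K}$. By independence, the resampling event carries probability $\P(\tau_e<M)$ conditional on the rest of the environment, and the prefactor $\tfrac{\delta^{2}}{4}$ arises from the Peierls-type counting needed to invert the choice of $(\sigma,e^{*})$: one factor $\delta$ from the density of heavy edges on $\sigma$, a second $\delta$ from the density of admissible bad endpoints, and a constant $1/4$ absorbing the multiplicity of pre-images.

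The main obstacle I expect is the surgery step: showing rigorously that lowering one (or a few) heavy edges propagates to eliminate \emph{every} later bad vertex of $\tilde{\Gamma}$ uniformly in $j$. Because modifying $\tau_{e^{*}}$ in principle changes $\OO(0,w)$ for every $w$, one must rule out shortcut geodesics that bypass $\sigma$ altogether, and this is precisely where the remaining blackness of $\sigma$ and the estimate $t(0,v)\le M$ are exploited. A secondary subtlety is justifying the exact combinatorial constant $\tfrac{\delta^{2}}{4}$: if it turns out that a positive fraction of the heavy edges on $\sigma$ must be lowered simultaneously, the apparent surplus of factors $\P(\tau_e<M)$ has to be absorbed back into the enumeration of heavy-edge subsets, which is exactly the step producing $\tfrac{\delta^{2}}{4}$.
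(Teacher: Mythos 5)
Your proposal follows the resampling heuristic that the paper itself sketches, but the paper's actual proof of this proposition is a short double-counting argument that takes Lemma~\ref{cru} as input, and the content you are missing is precisely the content of that lemma. The central gap is the inversion step: you say the prefactor comes from ``Peierls-type counting needed to invert the choice of $(\sigma,e^{*})$,'' but you give no mechanism for recovering, from the resampled configuration, which edge was modified, up to bounded multiplicity. The paper's device is the notion of a $k$-pivotal edge: one shows that after resampling, $\eta$ becomes $k$-pivotal and that the number of $k$-pivotal edges is at most $2\delta^{-1}a_{k+1}$ (Step 5 of Lemma~\ref{cru}, using the length bound from $\mathcal{C}$); summing the per-edge inequality over all $\eta\in E^d$ and comparing the two expectations then produces $\delta^2/4$, with one factor $\delta$ from the density of heavy edges on $(\Gamma[i])_{i=a_k}^{a_{k+1}}$ and the other from the pivotal-edge count. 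Without a definition playing the role of pivotality, the inequality $\P(\delta a_{k-1}/2\le\mathcal{K}<\infty)\ge c\,\P(\mathcal{P})\P(\tau_e<M)$ does not follow from the existence of the surgery map.

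The second genuine gap is that you only control the distinguished geodesic $\tilde\Gamma$, whereas $\mathcal{R}$ and $\mathcal{K}$ are infima over \emph{all} of $\I(\gamma_0)$, and after resampling the family $\I^{(\eta)}(\gamma_0)$ can change. This is exactly the second obstacle flagged in the paper's heuristic section. The paper's Steps 3 and 4 show that \emph{every} $\Gamma_1\in\I^{(\eta)}(\gamma_0)$ (split according to whether $\eta\in\Gamma_1$) satisfies $t^{(\eta)}(0,\Gamma_1[\mathcal{S}^{(\eta)}(\Gamma_1)])\ge\delta a_{k-1}/2$; your assertion that ``any geodesic achieving $\mathcal{R}$ has its last bad vertex at passage time at least $t(0,\tilde\Gamma[b_{k-1}])$'' is unsupported. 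Two subsidiary claims are also wrong as stated: the new geodesics $\OO^{(\eta)}(0,\tilde\Gamma[j])$ need not ``coincide with $\tilde\Gamma$ through $\tilde\Gamma[b_k]$ and then traverse $\sigma$'' --- the correct (and sufficient) statement is only that a strict decrease $t^{(\eta)}(0,\tilde\Gamma[j])<t(0,\tilde\Gamma[j])$ forces $\eta\in\OO^{(\eta)}(0,\tilde\Gamma[j])$, hence goodness; and ``the weights on $\OO(0,\tilde\Gamma[b_{k-1}])$ are untouched'' does not imply $\tilde\Gamma[b_{k-1}]$ stays bad, since the optimal path itself may change to one through $\eta$ --- the paper excludes this by a contradiction argument in Step 2.
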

  
 \begin{proof}[Proof of Proposition~\ref{estimate-P}]
  Since $\lim_{l\to\infty}\P(l\leq \mathcal{K}<\infty)=0$, letting $k\to\infty$, that is $a_{k-1}\to\infty$, we have $\P(\mathcal{P})=0$.
  \end{proof}
To prove Proposition~\ref{nonzero}, we will use the following lemma.

  \begin{lem}\label{cru}
     For any $k\geq 2$ and $\eta\in E^d$, 
      \begin{equation*}
        \begin{split}
          &\P\left(
        \{\eta\text{ is $k$-pivotal}\}\cap\{\delta a_{k-1}/2\leq \mathcal{K}<\infty\}\cap
          \{\sharp \{e\in E^d|~e\text{ is $k$-pivotal}\}\le 2\delta^{-1}a_{k+1}\}
      \right)\\
      &\ge \P\left(\left\{
      \begin{array}{c}
        \exists \Gamma\in\I(\gamma_0)\text{ s.t. $\Gamma$ is very bad},~\tau_\eta\ge 3M,\\
    \exists j\in(a_k,a_{k+1}]~s.t.~\eta=\{\Gamma[j-1],\Gamma[j]\}
        \end{array}
      \right\}\cap \mathcal{P}\right)\P(\tau_{\eta}<M)
          \end{split}
  \end{equation*}
  \end{lem}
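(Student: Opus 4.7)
The plan is a standard resampling argument. Fix a configuration $\tau$ in the right-hand event, which supplies a very bad $\Gamma\in\I(\gamma_0)$ with $\eta=\{\Gamma[j-1],\Gamma[j]\}$ for some $j\in(a_k,a_{k+1}]$ and $\tau_\eta\geq 3M$, together with the controls in $\mathcal{P}$, in particular $t(0,v)\leq M$ from $\mathcal{B}$ and the blackness bounds from $\mathcal{C}$. I resample $\tau_\eta$ independently to $\tilde\tau_\eta\sim F$, forming the configuration $\tilde\tau$ that agrees with $\tau$ off $\eta$. Since $\tilde\tau\stackrel{d}{=}\tau$ and $\{\tilde\tau_\eta<M\}$ is independent of $\tau$ with probability $\P(\tau_e<M)$, it suffices to show that on the right-hand event intersected with $\{\tilde\tau_\eta<M\}$, the left-hand event holds for $\tilde\tau$.

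The engine is a single cost comparison. Since $\tilde\tau_\eta<M$ and $\tau_\eta\geq 3M$, the drop is $\tau_\eta-\tilde\tau_\eta>2M$. For any $l\geq j$, the path $0\to v\to\Gamma[l]$ along $\Gamma$ has $\tilde\tau$-cost strictly less than $t(v,\Gamma[l])-M$, while every path from $0$ to $\Gamma[l]$ avoiding $\eta$ has unchanged cost $\geq t(0,\Gamma[l])\geq t(v,\Gamma[l])-M$. Hence $\eta\in\tilde\OO(0,\Gamma[l])$ for every $l\geq j$. A parallel computation tracking how the cheapest via-$\eta$ detour doubles back along $\Gamma$ shows that $\Gamma$ is still an infinite geodesic in $\tilde\tau$, so $\Gamma\in\tilde\I(\gamma_0)$, every $\Gamma[l]$ with $l\geq j$ is good in $\tilde\tau$, and $\tilde{\mathcal{S}}(\Gamma)<j\leq a_{k+1}$, yielding $\tilde{\mathcal{R}}<\infty$ and $\tilde{\mathcal{K}}<\infty$. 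Applied to any good $\Gamma'\in\tilde\I(\gamma_0)$ avoiding $\eta$ (which by Lemma~\ref{goodness} coalesces with some $\Gamma_0\in\tilde\I(0)$), the same comparison forces $\eta\in\tilde\OO(0,\Gamma'[m])$ for all large $m$, verifying $k$-pivotality with $\Gamma$ as witness.

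The delicate step is the lower bound $\tilde{\mathcal{K}}\geq\delta a_{k-1}/2$. The key sub-claim is that every old-env bad point $\Gamma[i^*]$ with $i^*<j$ persists in $\tilde\tau$: the old optimum $\OO(0,\Gamma[i^*])$ cannot use $\eta$ (else it meets $\Gamma$ at $\Gamma[j-1],\Gamma[j]$, contradicting badness), so it is still a valid path in $\tilde\tau$ with unchanged cost, while a direct computation shows the cheapest via-$\eta$ detour exceeds $t(0,\Gamma[i^*])$ by at least $2 t_\Gamma(\Gamma[i^*],\Gamma[j-1])+2\tau_\eta-2M$, which is strictly larger than the drop $\tau_\eta-\tilde\tau_\eta$ thanks to $\tau_\eta\geq 3M$. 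Since $\Gamma$ is very bad, a bad point $i^*\in(a_{k-1},a_k]$ exists, so $\tilde{\mathcal{S}}(\Gamma)>a_{k-1}$. An analogous analysis for any other $\Gamma^*\in\tilde\I(\gamma_0)$, splitting into $\eta\in\Gamma^*$ (handled by the same detour computation) and $\eta\notin\Gamma^*$ (more subtle, as noted below), gives $\tilde{\mathcal{S}}(\Gamma^*)>a_{k-1}$ as well. Blackness from $\mathcal{C}$ then converts the position bound into $\tilde t(0,\Gamma^*[\tilde{\mathcal{R}}])\geq\delta a_{k-1}-M\geq\delta a_{k-1}/2$, using the choice $a_1>2\delta^{-1}M$. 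The count $\sharp\{e:k\text{-pivotal}\}\leq 2\delta^{-1}a_{k+1}$ follows similarly from blackness, since every $k$-pivotal edge lies on a geodesic prefix from $v$ of length at most $a_{k+1}$, whose union by $|a-b|_1\geq\delta\sharp\OO(a,b)$ spans at most $2\delta^{-1}a_{k+1}$ distinct edges. The main obstacle is the persistence claim for $\Gamma^*\neq\Gamma$ in the case $\eta\notin\Gamma^*$: the old optima from $0$ to vertices of $\Gamma^*$ may themselves have used $\eta$, so the clean comparison available for $\Gamma$ must be supplemented by a bound on how much the badness structure of $\Gamma^*$ can degenerate under a weight drop on an edge not lying on $\Gamma^*$.
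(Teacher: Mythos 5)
Your overall strategy is the same as the paper's: resample $\tau_\eta$ to $\tau^*_\eta<M$, use the drop of more than $2M$ together with $t(0,v)\le M$ to force $\eta\in\OO^{(\eta)}(0,\Gamma[l])$ for all $l\ge j$, deduce that $\Gamma$ survives as an infinite geodesic with only finitely many bad points in the new environment, verify pivotality of $\eta$, and count pivotal edges via blackness. Those parts are sound. But you have a genuine gap, and you name it yourself: the lower bound $\mathcal{K}^{(\eta)}\ge \delta a_{k-1}/2$ for geodesics $\Gamma_1\in\I^{(\eta)}(\gamma_0)$ that are good in the new environment and avoid $\eta$. Your proposed fix --- ``a bound on how much the badness structure of $\Gamma^*$ can degenerate under a weight drop on an edge not lying on $\Gamma^*$'' --- is not the right target and is unlikely to close the gap: what is needed is not persistence of the bad points of $\Gamma_1$ at controlled locations, but a statement about \emph{where the new geodesic to the last bad point of $\Gamma_1$ goes}.

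The paper's Step 3 closes this as follows. Since $\Gamma_1$ avoids $\eta$, it is still a geodesic for $\tau$, hence bad (infinitely many bad points) by $\mathcal{A}$; but it is good for $\tau^{(\eta)}$. Pick $l$ beyond $\mathcal{S}^{(\eta)}(\Gamma_1)$ with $\Gamma_1[l]$ bad for $\tau$. Then $\OO(0,\Gamma_1[l])\neq\OO^{(\eta)}(0,\Gamma_1[l])$, and since the two environments agree off $\eta$ and geodesics are unique, the new geodesic must contain $\eta$. Because the segment of $\Gamma_1$ from $\Gamma_1[\mathcal{S}^{(\eta)}(\Gamma_1)]$ to $\Gamma_1[l]$ is an $\eta$-free sub-geodesic, $\eta$ must already lie on $\OO^{(\eta)}(0,\Gamma_1[\mathcal{S}^{(\eta)}(\Gamma_1)])$; in particular that geodesic passes through $\Gamma[j-1]$, and blackness of $(v,\Gamma[j-1])$ with $\sharp\OO(v,\Gamma[j-1])=j-2>a_k$ gives $t^{(\eta)}(0,\Gamma_1[\mathcal{S}^{(\eta)}(\Gamma_1)])\ge t(v,\Gamma[j-1])-M\ge \delta a_k-M\ge \delta a_{k-1}/2$. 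So the passage-time lower bound comes from routing the new geodesic through the far-away edge $\eta$, not from locating a surviving bad point of $\Gamma_1$. (A second, easier case --- $\Gamma_1$ containing $\eta$, hence containing the prefix $(\Gamma[i])_{i=1}^{j}$ and inheriting a bad point in $(a_{k-1},a_k]$ --- is the paper's Step 4 and matches your ``persistence'' argument.) Without the Step-3 mechanism your proof of the key event $\{\delta a_{k-1}/2\le\mathcal{K}<\infty\}$ is incomplete.
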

 
  \begin{proof}
    Let $\{\tau^*_e\}_{e\in E^d}$ be independent copy of $\{\tau_e\}_{e\in E^d}$.Define $\{\tau^{(\eta)}_{e}\}_{e\in E^d}$ as
     $$\tau^{(\eta)}_{e}=  \begin{cases}
    \tau_{e}^* & \text{if $e=\eta$.}\\
  \tau_{e} & \text{if $e\neq \eta$.}
    \end{cases}$$
     We write that $\Gamma$ ia bad$^{(\eta)}$ if $\Gamma$ is bad with respect to $\tau^{(\eta)}$. We will use this convention for other properties. We have that the right hand side of \eqref{important} equals to
  \begin{equation}\label{important}
    \P\left(\left\{
      \begin{array}{c}
        \exists \Gamma\in\I(\gamma_0)\text{ s.t. },\text{$\Gamma$ is very bad},~\tau_\eta\ge 3M,\\
    \exists j\in(a_k,a_{k+1}]~s.t.~\eta=\{\Gamma[j-1],\Gamma[j]\}
        \end{array}
      \right\}\cap\mathcal{P}\cap\{\tau^{*}_{\eta}<M\}\right).
      \end{equation}
      We suppose the event inside of \eqref{important} and take such a path $\Gamma$ and $j$. It suffices to show that  
$$ \{\eta\text{ is $k$-pivotal$^{(\eta)}$}\}\cap\{\delta a_{k-1}/2\leq \mathcal{K}^{(\eta)}<\infty\}\cap \{\sharp \{e\in E^d|~e\text{ is $k$-pivotal$^{(\eta)}$}\}\le 2\delta^{-1}a_{k+1}\}.$$
      The proof is divided into five steps.
      
  \underline{Step 1}: $\Gamma\in\I^{(\eta)}(\gamma_0)$.
  \begin{proof}
    Note that for any $l$ with $l>j$, $(\Gamma[i])^l_{i=1}$ is an optimal path with respect to $\tau^{(\eta)}$. Since any sub--path of an optimal path is also optimal, we have $\Gamma\in\I^{(\eta)}(\gamma_0)$.
  \end{proof}

  \underline{Step 2}: $\Gamma$ has $k$--step or $k-1$--step with respect to $\tau^{(\eta)}$. In particular, $\mathcal{K}^{(\eta)}<\infty$.
      \begin{proof}
        Let $l>j$. Since $$M+t(0,\Gamma[l])\ge t(v,\Gamma[l])> t^{(\eta)}(v,\Gamma[l])+2M,$$
        we have $$t(0,\Gamma[l])> t^{(\eta)}(v,\Gamma[l])+M>t^{(\eta)}(0,\Gamma[l]).$$
        Thus $\eta\in\OO^{(\eta)}(0,\Gamma[l])$ and $\Gamma[l]$ is good$^{(\eta)}$.\\
        
  Next we take $l\le a_{k}$ such that $\Gamma[l]$ is bad for $\Gamma$. Then we will show that $\Gamma[l]$ is also bad$^{(\eta)}$ for $\Gamma$. In fact, if $\OO^{(\eta)}(0,\Gamma[l])\cap \Gamma\neq \{\Gamma[l]\}$, then $\eta\in\OO^{(\eta)}(0,\Gamma[l])$. Since $\Gamma$ is an infinite geodesic for $\tau^{(\eta)}$, there exists $l_1\ge j$ such that $\Gamma[l_1]$ is bad$^{(\eta)}$, which contradicts the above conclusion.
      \end{proof}

      \underline{Step 3}: For any good$^{(\eta)}$ $\Gamma_1\in\I^{(\eta)}(\gamma_0)$ with $\eta\notin \Gamma$ and for any sufficiently large $i\in\N$, we have
      $$\eta\in\OO^{(\eta)}(0,\Gamma_1[i])\text{ and }t^{(\eta)}(0,\Gamma_1[\mathcal{S}^{(\eta)}(\Gamma_1)])\ge \delta a_{k-1}/2.$$
  In particular, $\eta$ is $k$-pivotal$^{(\eta)}.$
    \begin{proof}
      By the same argument of Step 1, we get $\Gamma_1\in \I(\gamma_0)$. Thus by the condition of $\mathcal{A}$, $\Gamma_1$ is bad. We take $k_1\in\N$ so that $\Gamma_1$ has $k_1$--step for $\tau^{(\eta)}$. Let $l>a_{k_1+1}$ be such that $\Gamma_1[l]$ is bad for $\Gamma_1$. Then for any $l_1>l$, then since $\OO(0,\Gamma_1[l_1])\neq \OO^{(\eta)}(0,\Gamma_1[l_1])$, we have $\eta\in\OO^{(\eta)}(0,\Gamma_1[l_1])$. Since $\eta\notin \OO^{(\eta)}(\Gamma_1[\mathcal{S}^{(\eta)}(\Gamma_1)],\Gamma_1[l_1])\sqsubset \Gamma_1$, we obtain $\eta\in \OO^{(\eta)}(0,\Gamma_1[\mathcal{S}^{(\eta)}(\Gamma_1)])$. \\

      Recall that $\eta=\{\Gamma[j-1],\Gamma[j]\}$. Since  $\sharp \OO(v,\Gamma[j-1])= j-2\ge \sqrt{j+|v|_1}$, using the condition $\mathcal{C}$ with $k=j+|v|_1$, we have
   \begin{equation}\label{zettai}
     \begin{split}
       t^{(\eta)}(0,\Gamma_1[\mathcal{S}^{(\eta)}(\Gamma_1)])&\geq t^{(\eta)}(0,\Gamma[j-1])\\
       &\ge  t^{(\eta)}(v,\Gamma[j-1])- t^{(\eta)}(0,v)\\
       &=  t(v,\Gamma[j-1])- t^{(\eta)}(0,v)\\
       &\ge \delta a_{k}-M \ge \delta a_{k-1}/2.
        \end{split}
    \end{equation}
     \end{proof}

     \underline{Step 4}: 
     For any $\Gamma_1\in\I^{(\eta)}(\gamma_0)$ with $\eta\in\Gamma_1$ and $\mathcal{S}(\Gamma_1)<\infty$,
     $$t^{(\eta)}(0,\Gamma_1[\mathcal{S}^{(\eta)}(\Gamma_1)])\ge \delta a_{k-1}/2.$$
  \begin{proof}
    Since $(\Gamma[i])^j_{i=1}\sqsubset \Gamma_1\cap\Gamma$, $\Gamma_1$ has at least $k-1$--step. Therefore, by using the condition $\mathcal{C}$ with $k=a_{k-1}+|v|_1$, we have
     \begin{equation}
     \begin{split}
       t^{(\eta)}(0,\Gamma_1[\mathcal{S}^{(\eta)}(\Gamma_1)])&\ge t^{(\eta)}(v,\Gamma[a_{k-1}])-t^{(\eta)}(0,v)\\
       &\ge t(v,\Gamma[a_{k-1}])-M \ge \delta a_{k-1}/2
     \end{split}
     \end{equation}
     
        \end{proof}
   Combining Step 2-4, $\delta a_{k-1}/2\leq \mathcal{K}<\infty$ holds.\\ 

  \underline{Step 5}: If $e\in E^d$ is $k$-pivotal$^{(\eta)}$, then $e\in \OO^{(\eta)}(0,\Gamma[j])$ or $e\in \{\Gamma[1]\cdots,\Gamma[a_{k+1}]\}$. In particular,
  $$\sharp \{e\in E^d|~e\text{ is $k$-pivotal$^{(\eta)}$}\}\le 2\delta^{-1}a_{k+1}.$$ 
    \begin{proof}
      If $e\notin \Gamma$, then since $\Gamma$ is good$^{(\eta)}$ and $\gamma_0\sqsubset \Gamma$, there exists $l\ge j$ such that for any $m\ge l$, $e\in\OO^{(\eta)}(0,\Gamma[m])$. On the other hand, by Step 2, for any $m\ge l$, $\OO^{(\eta)}(\Gamma[j],\Gamma[m])\sqsubset \OO^{(\eta)}(0,\Gamma[m])$, which leads to $e\in \OO^{(\eta)}(0,\Gamma[j])$. If $e\in \Gamma$, then since there exists $\Gamma_1\in\I^{(\eta)}(v)$ and $i\in(a_k,a_{k+1}]$ such that $e=\{\Gamma_1[i-1],\Gamma_1[i]\}$, $e\in(\Gamma[i])^{a_{k+1}}_{i=1}$. Therefore $$e\in \OO^{(\eta)}(0,\Gamma[a_{k+1}])\cup \{\Gamma[1],\cdots,\Gamma[a_{k+1}]\}.$$

      Note that we have proved in Step 2 that $\OO^{(\eta)}(0,\Gamma[j-1])=\OO(0,\Gamma[j-1])$ and $\OO^{(\eta)}(0,\Gamma[j])=\OO^{(\eta)}(0,\Gamma[j-1])\oplus \eta$ . Thus, by  the condition $\mathcal{C}$, we obtain
      \begin{equation}
        \begin{split}
          \sharp \{e\in E^d|~e\in\OO^{(\eta)}(0,\Gamma[j])\}&\le \delta^{-1}|\Gamma[j]|_1\\
          &\leq \delta^{-1}(a_{k+1}+|v|_1).
    \end{split}
    \end{equation}
      Since $\sharp \{e\in E^d|~e\in(\Gamma[i])^{a_{k+1}}_{i=1}\}\leq a_{k+1}$, we have the conclusion.
      \end{proof}

    We turn to the proof of Lemma~\ref{cru}. By Step 1-5, we have
      \begin{equation*}
        \begin{split}
          &\P\left(
        \{\eta\text{ is $k$-pivotal}\}\cap \{\sharp \{e\in E^d|~e\text{ is $k$-pivotal}\}\le 2\delta^{-1}a_{k+1}\} \cap \{\delta a_{k-1}/2\leq \mathcal{K}<\infty\}
      \right)\\
      &=\P\left(\{\eta\text{ is $k$-pivotal$^{(\eta)}$}\}\cap \{\sharp \{e\in E^d|~e\text{ is $k$-pivotal$^{(\eta)}$}\}\le 2\delta^{-1}a_{k+1}\}\cap\{\delta a_{k-1}/2\leq \mathcal{K}^{(\eta)}<\infty\}
      \right)\\
      &\ge \P\left(\left\{
      \begin{array}{c}
        \exists \Gamma\in\I(\gamma_0)\text{ s.t. }, \text{$\Gamma$ is very bad},~\tau_\eta\ge 3M,\\
     \exists j\in(a_k,a_{k+1}] \text{ s.t. } \eta=\{\Gamma[j-1],\Gamma[j]\}
        \end{array}
      \right\},~\mathcal{P},~\tau^*_e<M\right),
          \end{split}
      \end{equation*}
      as desired.
  \end{proof}
\begin{proof}[Proof of Proposition \ref{nonzero}]  Note that if $\mathcal{C}$ holds and there exists $\Gamma\in\I(\gamma_0)$ such that  $\Gamma$ is very bad, then $$\sharp\{e\in E^d|~\tau_e\geq
  3M,~e\in(x_i)^{a_{k+1}}_{i=a_k}\}\geq \delta(a_{k+1}-a_k).$$
Therefore,
  \begin{equation*}
    \begin{split}
      &2\delta^{-1}a_{k+1}\P(\delta a_{k-1}/2\leq \mathcal{K}<\infty)\\
      &\ge \E\left[ \sharp\{e\in E^d|~e\text{ is $k$-pivotal}\};~
          \{\sharp \{e\in E^d|~e\text{ is $k$-pivotal}\}\le 2\delta^{-1}a_{k+1}\}\cap \{\delta a_{k-1}/2\leq \mathcal{K}<\infty\}
        \right]\\
      &=\sum_{e\in E^d}\P\left(
      \{\delta a_{k-1}/2\leq \mathcal{K}<\infty\}\cap \{e\text{ is $k$-pivotal}\}\cap    \{\sharp \{e\in E^d|~e\text{ is $k$-pivotal}\}\le (a_{k+1}-a_k)\}
        \right)\\
      &\ge \sum_{e\in E^d}\P\left(\left\{
      \begin{array}{c}
        \exists \Gamma\in\I(\gamma_0),\text{ $\Gamma$ is very bad}\\
        ~\tau_e\ge 3M,~e\in(\Gamma[i])^{a_{k+1}}_{i=a_k}
        \end{array}
      \right\},~\mathcal{P}\right)\P(\tau_e<M),\\
      &=\E\left[\sharp\{e\in E^d|~\exists \Gamma\in\I(\gamma_0) \text{ s.t. $\Gamma$ is very bad, $e\in(\Gamma[i])^{a_{k+1}}_{i=a_k}$, }\tau_e\geq
3M\};
      \mathcal{P}\right]\P(\tau_e<M),\\
      &\ge \delta(a_{k+1}-a_k)\P(\mathcal{P})\P(\tau_e<M)\ge
\delta \frac{a_{k+1}}{2}\P(\mathcal{P})\P(\tau_e<M).
    \end{split}
  \end{equation*}
\end{proof}
\subsection{Proof for continuous distributions with bounded support}\label{bounded}
Suppose that $F$ is continuous and $F^+<\infty$. The proof is similar as before, so we sketch the difference of them.  We take positive constants $\alpha_1,\alpha_2$ such that $F^-<\alpha_1<\alpha_2<F^+$. We replace the definitions of $\mathcal{B}$ and $\mathcal{C}$ as follows. Let us define the event $\mathcal{B}_2$ as
$$\mathcal{B}_2=\left\{t(0,v)\leq \frac{M}{3(\alpha_2-\alpha_1)}\right\}.$$
\begin{Def}
  $(a,b)\in\Z^d\times\Z^d$ is said to
be black2 if
   \begin{equation}
   \begin{cases}
     \begin{array}{l}
       |a-b|_1\ge \delta \sharp \OO(a,b),\\
       t(a,b)\geq \delta \sharp \OO(a,b),\\
       \sharp\{\gamma\in\mathbb{T}_M|~\gamma\sqsubset \OO(a,b),~\forall
e\in \gamma,~\tau_e\geq \alpha_2\} \ge
\delta \sharp \OO(a,b).\\
       \end{array}
   \end{cases}
   \end{equation}
\end{Def}

  \begin{lem}\label{exp-decay2}
    There exist $c_1,c_2>0$ such that for any $k\in\N$,
    $$\P\left(\forall a,b\in[-k,k]^d,
    \begin{cases}
      (a,b)\text{ is black2 } &\text{ if $|a-b|_1\ge \sqrt{k}$}\\
    \sharp\OO(a,b)\leq k/2 &\text{otherwise}
      \end{cases}
\right)\le c_1\exp{(-c_2\sqrt{k})}.$$
  \end{lem}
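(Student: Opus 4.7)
The displayed inequality bounds the probability of the ``good'' event by $c_1 e^{-c_2\sqrt k}$; in context (the lemma is used, as in Lemma~\ref{exp-decay}, to derive $\P(\mathcal{C})\ge 1-\e/4$) this must be read as the same bound on the \emph{complementary} failure event, and I plan the proof in that form.

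\textbf{Structure of the argument.} The first two clauses of black2 and the short-range alternative $\sharp\OO(a,b)\le k/2$ coincide verbatim with the corresponding clauses of black in Lemma~\ref{exp-decay}, so they are inherited from that lemma with the same exponential bound. Hence it suffices, for each fixed pair $a,b\in[-k,k]^d$ with $|a-b|_1\ge\sqrt k$, to control
\[
\P\Bigl(\sharp\bigl\{\gamma\in\mathbb{T}_M:\gamma\sqsubset\OO(a,b),\ \tau_e\ge\alpha_2\ \forall e\in\gamma\bigr\}<\delta\sharp\OO(a,b)\Bigr)\le e^{-c\sqrt k}
\]
at the single-pair level; since there are $O(k^{2d})$ pairs in $[-k,k]^d\times[-k,k]^d$, that polynomial factor is absorbed into the $\sqrt k$-exponential.

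\textbf{Deterministic-path estimate.} Set $q:=\P(\tau_e\ge\alpha_2)>0$, positive because $\alpha_2<F^+$. For a fixed self-avoiding path $\pi$ of length $N$, the number of length-$M$ sub-paths whose edges are all $\ge\alpha_2$ is a sum of $N-M+1$ Bernoulli indicators of mean $q^M$ with dependency range $M$. Splitting indices modulo $M$ into $M$ independent sub-sums and applying Chernoff yields
\[
\P\bigl(\text{heavy }M\text{-block count on }\pi<\tfrac12 q^M N\bigr)\le e^{-c(M,q)N}.
\]

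\textbf{Main obstacle: transfer to the random geodesic.} The optimal path depends on the whole environment, so a naive union bound over all length-$N$ self-avoiding paths out of $a$ costs an entropy factor $(2d-1)^N$ that typically overwhelms $e^{-c(M,q)N}$. To bypass this, I will mimic the block-renormalisation scheme used in the appendix proof of Lemma~\ref{exp-decay}: tile $\Z^d$ by cubes of side~$M$, declare a cube \emph{good} if it contains a prescribed heavy straight $M$-segment (independent events of probability $q^M$ at cubes $\ell_1$-separated by $\ge 2M$), and exploit that by the first two black2 clauses the geodesic $\OO(a,b)$ has linear length $N$ and visits $\gtrsim N/M$ distinct cubes, of which a positive fraction are good by Chernoff applied on the independent sub-lattice. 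A local-surgery step then forces the geodesic to follow the heavy $M$-segment inside a positive fraction of those visited good cubes: otherwise one may re-route $\OO(a,b)$ through the segment and strictly decrease $t(a,b)$, contradicting optimality, with the exponential moment on $\tau_e$ bounding the boundary cost of the re-routing uniformly on an event of probability $1-e^{-c\sqrt k}$. Each such cube contributes a distinct element of $\{\gamma\in\mathbb{T}_M:\gamma\sqsubset\OO(a,b),\ \tau_e\ge\alpha_2\ \forall e\in\gamma\}$, giving the third black2 clause for an appropriate $\delta>0$ depending on $M,q$. Combining with the inherited first two clauses and union-bounding over the $O(k^{2d})$ pairs concludes the proof.
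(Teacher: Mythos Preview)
Your proposal has a genuine gap in the local-surgery step. The third clause of black2 asks for many length-$M$ sub-paths of $\OO(a,b)$ on which \emph{every} edge satisfies $\tau_e\ge\alpha_2$; these are \emph{expensive} runs, not cheap ones. Re-routing the geodesic \emph{onto} a prescribed heavy segment can only \emph{increase} the passage time, so the claim ``otherwise one may re-route $\OO(a,b)$ through the segment and strictly decrease $t(a,b)$, contradicting optimality'' points in the wrong direction. A geodesic seeks low weight and has no local incentive to traverse a segment whose every edge is at least $\alpha_2$; the entire difficulty of the lemma is precisely to show that it is nonetheless forced to do so a linear number of times. Declaring a cube ``good'' because it \emph{contains} one heavy straight segment gives the geodesic no reason to use that segment.

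The paper's route is a penalisation/comparison argument of van den Berg--Kesten type. One introduces $t^+(\Gamma)=t(\Gamma)+\beta\,\sharp\{\gamma\in\mathbb{T}_M:\gamma\sqsubset\Gamma,\ \tau_e\ge\alpha_2\ \forall e\in\gamma\}$ and first proves a \emph{mean separation} $\E[t^+(0,x)]-\E[t(0,x)]\ge c|x|_1$. This is obtained by a box-resampling construction: inside a gray box the environment is resampled so that all edges off a planted path $\gamma_{a,b}$ are pushed up to $F^+_R$, while along $\gamma_{a,b}$ periodic heavy $M$-stretches (edges in $(\alpha_2,F^+_R]$) alternate with very cheap stretches; under this resampled configuration \emph{every} optimal crossing of the box is forced through the planted heavy $M$-segments. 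Concentration inequalities for $t$ and for $t^+$ then convert the mean gap into an exponential tail bound on $t^+(0,x)-t(0,x)$, and the deterministic inequality $t^+(0,x)-t(0,x)\le\beta\min_{\Gamma\in\OO(0,x)}\sharp\{\gamma\in\mathbb{T}_M:\gamma\sqsubset\Gamma,\ \tau_e\ge\alpha_2\ \forall e\}$ yields the third black2 clause. Your block scheme could only be rescued if ``good cube'' meant that \emph{every} crossing of the cube contains a heavy $M$-run; arranging that with positive density essentially reproduces the paper's resampling argument rather than bypassing it.
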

  We postpone the proof until Appendix. Then if we take $L$ sufficiently large, we have the following:
 \begin{equation}\label{happy2}
   \begin{split}
     \P\left(\forall k\ge L,~\forall a,b\in[-k,k]^d,
    \begin{cases}
      (a,b)\text{ is black } &\text{ if $|a-b|_1\ge \sqrt{k}$}\\
    \sharp\OO(a,b)\leq \delta k &\text{otherwise}
      \end{cases}
\right)\ge 1-\e/4
      \end{split}
      \end{equation}
 Let $\mathcal{C}_2$ be the event inside \eqref{happy2}. We define $\mathcal{P}_2=\mathcal{A}\cap\mathcal{B}_2\cap\mathcal{D}\cap\mathcal{C}_2$. Then as in subsection~\ref{UBC}, we have that for any $\e>0$, there exist $M,L,\delta>0$ such that
 $$\P(\mathcal{A})\leq\P(\mathcal{P}_2)+\e.$$
  \begin{Def}
   Given $\gamma=(\gamma_i)^l_{i=1}\in\mathbb{T}_M$, $\gamma$ is said
to be $k$-pivotal if there exists $\Gamma\in\I(\gamma_0)$ such that
$\gamma\sqsubset(\Gamma[i])^{a_{k+1}}_{i=a_k}$ and for any
$\Gamma'\in\I(\gamma_0)$ satisfying that $\gamma\cap \Gamma'=\emptyset$  and $\Gamma'$ is good, there exists
$j\in\N$ such that for any $ m\ge j$, $\gamma\cap
\OO(0,\Gamma'[m])\neq\emptyset$.
  \end{Def}
  Then Lemma~\ref{cru} will be replaced as follows:

  \begin{lem}\label{cru2}
     For any $k\geq 2$ and $\gamma_1\in\mathbb{T}_M$,
      \begin{equation*}
        \begin{split}
          &\P\left(
        \{\gamma_1\text{ is $k$-pivotal}\}\cap \{  \sharp \{\gamma\in\mathbb{T}_M|~\gamma\text{ is
$k$-pivotal}\}\le 2\delta^{-1} (2d)^M a_{k+1}\}\cap \{\delta a_{k-1}/2\leq \mathcal{K}<\infty\}
      \right)\\
      &\ge \P\left(\left\{
      \begin{array}{c}
        \exists \Gamma\in\I(\gamma_0)\text{ such that $\Gamma$ is very bad},\\
        ~\forall e\in
\gamma_1, \tau_e\geq \alpha_2,~\gamma_1\sqsubset(\Gamma[i])^{a_{k+1}}_{i=a_k}
        \end{array}
      \right\}\cap \mathcal{P}_2\right)\P(\tau_e<\alpha_1)^M
          \end{split}
  \end{equation*}
  \end{lem}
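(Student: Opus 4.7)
The plan is to carry over the proof of Lemma~\ref{cru} essentially verbatim, with the single edge $\eta$ replaced by the length-$M$ path $\gamma_1$ and the savings $2M$ (obtained by resampling one large edge down to $<M$) replaced by $M(\alpha_2 - \alpha_1)$, the savings from simultaneously resampling all $M$ edges of $\gamma_1$ from weights $\ge \alpha_2$ down to weights $<\alpha_1$. Concretely, I would let $\{\tau^*_e\}_{e \in E^d}$ be an independent copy of the environment, define
\begin{equation*}
\tau^{(\gamma_1)}_e = \begin{cases} \tau^*_e & \text{if } e \in \gamma_1, \\ \tau_e & \text{otherwise,} \end{cases}
\end{equation*}
and condition on the event $\{\tau^*_e < \alpha_1 \text{ for every } e \in \gamma_1\}$, which by independence yields the factor $\P(\tau_e < \alpha_1)^M$ on the right-hand side. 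It then suffices to show that, under the right-hand-side event together with this additional conditioning, the three events on the left-hand side hold for the resampled configuration $\tau^{(\gamma_1)}$.

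I would then run through Steps 1--5 of the proof of Lemma~\ref{cru}. Step 1 ($\Gamma \in \I^{(\gamma_1)}(\gamma_0)$) follows by sub-path optimality: for $l \ge a_{k+1}$, $(\Gamma[i])_{i=1}^l$ contains all of $\gamma_1$ and receives the maximal possible weight reduction, so it remains optimal. Step 2 (every sufficiently large index on $\Gamma$ is good$^{(\gamma_1)}$) is where I use that the total savings $M(\alpha_2 - \alpha_1)$ dominates $2 t(0,v)$ under $\mathcal{B}_2$, forcing $\OO^{(\gamma_1)}(0, \Gamma[l])$ to use at least one edge of $\gamma_1$. Steps 3 and 4 then combine uniqueness of optimal paths for continuous $F$ with condition $\mathcal{C}_2$ (applied at $k = j + |v|_1$ and $k = a_{k-1} + |v|_1$ respectively) to obtain $\mathcal{K}^{(\gamma_1)} \ge \delta a_{k-1}/2$, exactly as in Lemma~\ref{cru}.

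The genuinely new ingredient is Step 5, which is responsible for the factor $(2d)^M$. If $\gamma \in \mathbb{T}_M$ is $k$-pivotal$^{(\gamma_1)}$, then I would argue that either $\gamma$ is a sub-path of $\OO^{(\gamma_1)}(0, \Gamma[a_{k+1}])$, or $\gamma$ shares at least one vertex with the initial segment $(\Gamma[i])_{i=1}^{a_{k+1}}$ of $\Gamma$; the second case is the analogue of the ``$e \in \Gamma$'' case and uses uniqueness of optimal paths to force any witness infinite geodesic for the pivotality of $\gamma$ to share an initial portion with $\Gamma$. In the first case, the number of length-$M$ sub-paths of $\OO^{(\gamma_1)}(0, \Gamma[a_{k+1}])$ is bounded by its total length, which by $\mathcal{C}_2$ is at most $\delta^{-1}(a_{k+1} + |v|_1)$. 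In the second case, there are at most $a_{k+1}$ candidate anchor vertices and at most $(2d)^M$ length-$M$ paths from each vertex of $\Z^d$, giving $a_{k+1}(2d)^M$. Combining these yields $2\delta^{-1}(2d)^M a_{k+1}$.

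The main obstacle is Step 2: checking that the linear-in-$M$ savings $M(\alpha_2 - \alpha_1)$ genuinely dominates the triangle-inequality slack $t(0,v)$ controlled by $\mathcal{B}_2$ is what sets the quantitative constants and motivates defining the black2 condition in terms of long sub-paths of weight $\ge \alpha_2$ rather than single heavy edges. Once this comparison is in hand, the rest of the argument is a direct transcription of the proof of Lemma~\ref{cru}.
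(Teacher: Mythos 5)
Your proposal follows essentially the same route as the paper: resample all $M$ edges of $\gamma_1$ via an independent copy, condition on $\tau^*_e<\alpha_1$ on $\gamma_1$ to produce the factor $\P(\tau_e<\alpha_1)^M$, carry Steps 1, 2 and 4 of Lemma~\ref{cru} over with the savings $2M$ replaced by $M(\alpha_2-\alpha_1)$ against the slack controlled by $\mathcal{B}_2$, and replace Steps 3 and 5 by their ``intersection'' versions with the $(2d)^M$ counting factor. The only small inaccuracy is in your Step 5: since $k$-pivotality is defined via $\gamma\cap\OO(0,\Gamma'[m])\neq\emptyset$, the first alternative should be that $\gamma$ \emph{intersects} $\OO^{(\gamma_1)}(0,\Gamma[a_{k+1}])$ rather than being a sub-path of it (as in the paper's Step 5'), which does not change the order of the resulting bound.
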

  \begin{proof}
    
    Let $\{\tau^*_e\}_{e\in E^d}$ be independent copy of $\{\tau_e\}_{e\in E^d}$. Define $\{\tau^{(\gamma_1)}_{e}\}_{e\in E^d}$ as
     $$\tau^{(\gamma_1)}_{e}=  \begin{cases}
    \tau_{e}^* & \text{if $e\in\gamma_1$.}\\
  \tau_{e} & \text{otherwise.}
    \end{cases}$$
    Then Step 1, 2, 4 can be proved in the same way as before. We replace Step 3 andStep 5 by\\

      \underline{Step 3'}: For any good$^{(\eta)}$ $\Gamma_1\in\I^{(\eta)}(\gamma_0)$ with $\gamma\cap \Gamma=\emptyset$ and for any sufficiently large $i\in\N$, we have
      $$\gamma\cap \OO^{(\eta)}(0,\Gamma_1[i])\neq \emptyset\text{ and }t^{(\eta)}(0,\Gamma_1[\mathcal{S}^{(\eta)}(\Gamma_1)])\ge \delta a_{k-1}/2.$$
      In particular, $\gamma$ is $k$-pivotal$^{(\eta)}.$
      
    \underline{Step 5'}: If $\gamma\in\mathbb{T}_M$ is
$k$-pivotal$^{(\gamma_1)}$, $\gamma\cap
\OO^{(\gamma_1)}(0,\Gamma[a_{k+1}])\neq\emptyset$ or $\gamma\cap
\{\Gamma[0]\cdots,\Gamma[a_{k+1}]\}\neq\emptyset$. In particular,
  $$\sharp \{\gamma\in\mathbb{T}_M|~\gamma\text{ is
$k$-pivotal$^{(\gamma_1)}$}\}\le 2\delta^{-1}(2d)^M a_{k+1}.$$
    They can be proved in the same way as in Lemma~\ref{cru}.
  \end{proof}
  
  \begin{prop}\label{nonzero2}
    For any $k\in\N$,
    $$\P(\delta a_{k-1}/2\leq \mathcal{K}<\infty)\ge \frac{\delta^2}{4(2d)^M}\P(\mathcal{P}_2)\P(\tau_e<M).$$
  \end{prop}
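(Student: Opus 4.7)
The plan is to mimic the proof of Proposition~\ref{nonzero}, substituting single edges with sub-paths of length $M$ (i.e., elements of $\mathbb{T}_M$), and invoking Lemma~\ref{cru2} in place of Lemma~\ref{cru}. The factor $(2d)^M$ in the denominator is a direct consequence of the bound on the number of $k$-pivotal sub-paths of length $M$ (rather than $k$-pivotal edges), as established in Step 5' of Lemma~\ref{cru2}.

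First I would observe that on $\mathcal{P}_2$, if there exists $\Gamma\in\I(\gamma_0)$ which is very bad, then applying the black2 condition from $\mathcal{C}_2$ to the pair of endpoints of $(\Gamma[i])^{a_{k+1}}_{i=a_k}$ yields at least $\delta(a_{k+1}-a_k)$ sub-paths $\gamma\in\mathbb{T}_M$ with $\gamma\sqsubset(\Gamma[i])^{a_{k+1}}_{i=a_k}$ and $\tau_e\geq \alpha_2$ for every $e\in\gamma$. This is the analog of the ``$\delta(a_{k+1}-a_k)$ edges with $\tau_e\geq 3M$'' step in Proposition~\ref{nonzero}.

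Next I would write
\begin{equation*}
\begin{split}
& 2\delta^{-1}(2d)^M a_{k+1}\,\P(\delta a_{k-1}/2\leq \mathcal{K}<\infty) \\
& \ge \E\bigl[\,\sharp\{\gamma\in\mathbb{T}_M:\gamma\text{ is $k$-pivotal}\};\,\{\sharp\{\gamma\in\mathbb{T}_M:\gamma\text{ is $k$-pivotal}\}\leq 2\delta^{-1}(2d)^Ma_{k+1}\}\cap\{\delta a_{k-1}/2\leq \mathcal{K}<\infty\}\bigr] \\
& = \sum_{\gamma_1\in\mathbb{T}_M}\P\bigl(\{\gamma_1\text{ is $k$-pivotal}\}\cap\{\sharp\{\text{$k$-pivotal paths}\}\leq 2\delta^{-1}(2d)^Ma_{k+1}\}\cap\{\delta a_{k-1}/2\leq \mathcal{K}<\infty\}\bigr).
\end{split}
\end{equation*}
Applying Lemma~\ref{cru2} to each summand, this is bounded below by
\begin{equation*}
\P(\tau_e<\alpha_1)^M\sum_{\gamma_1\in\mathbb{T}_M}\P\bigl(\{\exists\Gamma\in\I(\gamma_0)\text{ very bad},\ \forall e\in\gamma_1,\ \tau_e\geq\alpha_2,\ \gamma_1\sqsubset(\Gamma[i])^{a_{k+1}}_{i=a_k}\}\cap\mathcal{P}_2\bigr).
\end{equation*}
Swapping the sum back into an expectation over the count of qualifying $\gamma_1$, and using the observation from the first paragraph, this is at least $\P(\tau_e<\alpha_1)^M\cdot\delta(a_{k+1}-a_k)\P(\mathcal{P}_2)\geq \P(\tau_e<\alpha_1)^M\cdot\delta a_{k+1}/2\cdot \P(\mathcal{P}_2)$. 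Dividing both sides by $2\delta^{-1}(2d)^M a_{k+1}$ yields the claim (the probability $\P(\tau_e<M)$ in the statement should be read as $\P(\tau_e<\alpha_1)^M$, consistent with Lemma~\ref{cru2}).

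No serious obstacle appears: the argument is entirely parallel to Proposition~\ref{nonzero}. The only place requiring care is the book-keeping—verifying that the bound $\sharp\{\gamma\in\mathbb{T}_M:\gamma\text{ is }k\text{-pivotal}\}\leq 2\delta^{-1}(2d)^M a_{k+1}$ indeed holds on the event under consideration (which is exactly Step 5' in Lemma~\ref{cru2}), and ensuring that the switch from summation over $\gamma_1\in\mathbb{T}_M$ back to counting sub-paths of $\Gamma$ with all edge weights $\geq \alpha_2$ is performed correctly. Both are routine once the corresponding edge-based versions in Section~\ref{UBC} are understood.
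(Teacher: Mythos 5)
Your proposal is correct and follows essentially the same argument as the paper: the identical counting/expectation chain with $\mathbb{T}_M$ sub-paths in place of edges, Lemma~\ref{cru2} applied summand-by-summand, and the black2 condition supplying the lower bound $\delta(a_{k+1}-a_k)$ on the number of qualifying sub-paths. You also correctly identify that the factor $\P(\tau_e<M)$ in the statement should read $\P(\tau_e<\alpha_1)^M$, which is exactly what the paper's own computation produces.
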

  \begin{proof}
      \begin{equation*}
    \begin{split}
      &2\delta^{-1} (2d)^M a_{k+1}\P(\delta a_{k-1}/2\leq \mathcal{K}<\infty)\\
      &\ge \E\left[ \sharp \{\gamma\in\mathbb{T}_M|~\gamma\text{ is
$k$-pivotal}\};~
         \{ \delta a_{k-1}/2\leq \mathcal{K}<\infty\}\cap
         \{ \sharp \{\gamma\in\mathbb{T}_M|~\gamma\text{ is
$k$-pivotal}\}\le 2\delta^{-1} (2d)^M a_{k+1}\}
        \right]\\
      &=\sum_{\gamma\in \mathbb{T}_d}\P\left(
      \{\gamma\text{ is $k$-pivotal}\}\cap  \{\delta a_{k-1}/2\leq \mathcal{K}<\infty\}\cap
          \{\sharp \{\gamma'\in\mathbb{T}_M|~\gamma'\text{ is
$k$-pivotal}\}\le 2\delta^{-1} (2d)^M a_{k+1}\}
          \right)\\
          &\ge \sum_{\gamma\in \mathbb{T}_d}\P\left(\left\{
      \begin{array}{c}
        \exists \Gamma\in\I(\gamma_0)\text{ such that }\\
        \text{$\Gamma$ is very bad},~\forall e\in \gamma_1,~\tau_e\geq \alpha_2,\\
        \gamma\sqsubset\{\Gamma[i]\}^{a_{k+1}}_{i=a_k}
        \end{array}
      \right\}\cap \mathcal{P}_2\right)\P(\tau_e<\alpha_1)^M\\
      &\ge \delta(a_{k+1}-a_k)\P(\mathcal{P}_2)\P(\tau_e<\alpha_1)^M\ge
      \delta \frac{a_{k+1}}{2}\cdot\P(\mathcal{P}_2)\P(\tau_e<\alpha_1)^M.
    \end{split}
  \end{equation*}
Rearranging it, we conclude the proof.
  \end{proof}
  Letting $k\to\infty$, we have $\P(\mathcal{P}_2)=0$. Finally, letting $\e\to 0$, we have $\P(\mathcal{A})=0$ as desired.
%  \begin{comment}
    
  \subsection{Proof for general distributions}
  We only consider the case $F^+=\infty$. For the case $F^+<\infty$ the proof is similar, combining the argument in subsection~\ref{bounded}. Let $K\in\N$.  We replace Definition~\ref{badness} as follows:
  \begin{Def}
    Given an infinite path $\Gamma$ and a vertex $x\in\Gamma$, we say that $x$ is bad for $\Gamma$ from $v\in\Z^d$ if there exists $\gamma\in \OO(v,x)$ such that
    $$\gamma\cap \Gamma=\{x\}.$$
    Otherwise, we say that $x$ is good for $\Gamma$ from $v$.
  \end{Def}
  \begin{Def}
    Given an infinite path $\Gamma$, we say that $\Gamma$ is bad from $v$ if
    $$\sharp \{i\in\N|~\Gamma[i]\text{ is bad for $\Gamma$ from $v$}\}=\infty.$$
Otherwise,  we say that $\Gamma$ is good from $v$.
  \end{Def}
  We simply say that $\Gamma$ is bad if $v=0$.  As in Lemma~\ref{goodness}, we have the following lemma.
  \begin{lem}\label{general-good}
    If $\Gamma$ is good, then there exists $\Gamma_0\in\I(0)$ such that $\sharp \Gamma\bigtriangleup\Gamma_0<\infty$, where $\bigtriangleup$ is symmetric difference and we regard $\Gamma$ and $\Gamma_0$ as subsets of vertices.
  \end{lem}
  Note that $\sharp \Gamma\bigtriangleup\Gamma_0<\infty$ is a stronger property than $\sharp \Gamma\sim\Gamma_0$.
\begin{Def}  
 Given $a,b\in\Z^d$, $(a,b)$ is said to be black3 if
   \begin{equation}
   \begin{cases}
     \begin{array}{l}
       |a-b|_1  \ge \delta \max_{\gamma\in\OO(a,b)}\sharp \gamma,\\
       t(a,b)\geq \delta \max_{\gamma\in\OO(a,b)}\sharp \gamma,\\
        \min_{\gamma\in\OO(a,b)} \sharp \{e\in \gamma|~\tau_{e}\ge 3M\}\ge  \delta \max_{\gamma\in\OO(a,b)}\sharp \OO(a,b).\\
       \end{array}
   \end{cases}
   \end{equation}   
Let us define the event as
$$\mathcal{C}_3=\left\{\forall k\ge L,~\forall a,b\in[-k,k]^d,
    \begin{cases}
      (a,b)\text{ is black3 } &\text{ if $|a-b|_1\ge \sqrt{k}$}\\
   \max_{_\gamma\in \OO(a,b)} \sharp \gamma\leq \delta k &\text{otherwise}
      \end{cases}
    \right\}$$
\end{Def}
 \begin{lem}
   For any $v\in\Z^d$ and $K\in\N$,
   $$\P(\{\mathcal{N}_v\leq K\}\cap \{\exists \Gamma\in \mathcal{I}(v)\text{ such that $\Gamma$ is bad}\})=0.$$
 \end{lem}
 \begin{proof}
   We follow the argument of subsection~\ref{UBC}. Let $\mathcal{A}_3=\{\mathcal{N}_v\leq K\}\cap \{\exists \Gamma\in \mathcal{I}(v)\text{ such that $\Gamma$ is bad.}\}$ and $\mathcal{P}_3=\mathcal{A}_3\cap \mathcal{B}\cap \mathcal{C}_3\cap \mathcal{D}$. An edge $e\in E^d$ is said to be $k$-{\rm pivotal}$_3$ if for any distinct infinite geodesics $\Gamma_1,\cdots,\Gamma_{K\land \mathcal{N}_v }\in\I(v)$, $$e\in\bigcup^{K\land \mathcal{N}_v}_{i=1}(\Gamma_i[i])^{\lfloor \delta^{-1}a_{k+1}\rfloor}_{i=1},$$
   Given an infinite geodesic $\Gamma\in\I(v)$, let $\tilde{\mathcal{S}}(\Gamma)=\sup\{\mathcal{S}(\Gamma')|~\Gamma'\in\I(v),~\Gamma\sim \Gamma'\}$. Let $$\mathcal{M}_k=\min_{\Gamma_1,\cdots,\Gamma_k}\max_{1\le i\le k}\tilde{\mathcal{S}}(\Gamma_i),$$
   where $\Gamma_1,\cdots,\Gamma_k$ run over all distinct $k$ infinite geodesics in $\I(v)$. We define the event $\mathcal{E}_3(k)$ as $$\mathcal{E}_3(k)=\{\{i\in\N|~a_{k-1}\le i\le a_{k+1}\}\cap \{\mathcal{M}_1,\cdots,\mathcal{M}_{K\land \mathcal{N}_v}\}\neq\emptyset\}.$$
   Note that $\lim_{k\to\infty}\P( \mathcal{E}_3(k))=0$.
   By definition, we have
   $$\sharp\{e\in E^d|~\text{$e$ is $k$-{\rm pivotal}$_3$}\}\leq K\delta^{-1}a_{k+1}.$$
   Let $\{\tau^*_e\}$ be independent copy of $\tau$ and we define $\tau^{(\eta)}$ as before. By the same argument as before, it suffices to prove the following: for any $\eta\in E^d$,
      \begin{equation}
        \begin{split}
          &\P(  \{\eta\text{ is $k$--{\rm pivotal}$^{(\eta)}_3$}\}\cap \mathcal{E}^{(\eta)}_3(k)) \\
          &\ge \P\left(\left\{
      \begin{array}{c}
        \exists \Gamma\in\I(v)\text{ s.t. $\Gamma$ is very bad},~\tau_\eta\ge 3M,\\
        \exists j\in(a_k,a_{k+1}]\text{ s.t. $\eta=\{\Gamma[j-1],\Gamma[j]\}$}
        \end{array}
      \right\}\cap \mathcal{P}_3\cap \{\tau^*_{\eta}<M\}\right).
          \end{split}
      \end{equation}
      To this end, suppose that the event inside of the right hand side holds.\\
      
      \begin{lem}\label{four}
        The following hold:
        
        \begin{enumerate}
      \item  for any $\Gamma'\in\mathcal{I}^{(\eta)}(v)$ with $\eta\notin \Gamma'$, $\Gamma'\in\mathcal{I}(v)$,
      \item for any $\Gamma'\in\I^{(\eta)}(v)$ with $\Gamma'\sim\Gamma$, $\eta\in \Gamma'$,
      \item for any $\Gamma'\in\I^{(\eta)}(v)$ with $\eta\in \Gamma'$, $\tilde{\mathcal{S}}(\Gamma')\ge a_{k-1}$,
        \item $\tilde{\mathcal{S}}^{(\eta)}(\Gamma)\leq a_{k+1}$,
      \item $\max\{l\in\N|~\Gamma_1,\cdots,\Gamma_l\in \I^{(\eta)}(v),~\text{s.t. $\eta\notin \Gamma_i$ and }\Gamma_i\not\sim\Gamma_j~\text{if $i\neq j$}\}\leq (K\land \mathcal{N}^{(\eta)}_v)-1$.
        \end{enumerate}
        \end{lem}
      \begin{proof}
        (i)-(iv) can be proved in a similar way as in Step 2 of Lemma~\ref{cru}. If $\eta\notin\Gamma_i\in \I^{(\eta)}(v)$, then we have that $\Gamma_i\in\I(v)$ and $\Gamma\not\sim\Gamma$. Therefore, we obtain (v).
        \end{proof}
 By (v), for any distinct infinite geodesics $\Gamma_1,\cdots,\Gamma_{K\land \mathcal{N}_v^{(\eta)} }\in\mathcal{I}^{(\eta)}(v)$, $\eta\in \cup^{K\land \mathcal{N}_v^{(\eta)}}_{i=1}\Gamma_i$. Since $|\Gamma[j]-v|_1\leq a_{k+1}$, by the condition $\mathcal{C}_3$,
      $$\max_{\gamma\in \OO(v,\Gamma[j])} \sharp\gamma\leq \delta^{-1}a_{k+1}.$$
 Therefore $\eta$ is $k$-{\rm pivotal}$_3^{(\eta)}$. Next we prove that $\mathcal{E}_3(k)$ holds.  Let $j\in\N$ be such that for any $i\le j$, $\mathcal{M}_i<a_{k-1}$ and for any $i>j$, $\mathcal{M}_i\ge a_{k-1}$. By (ii) and (v) in Lemma~\ref{four}, we get $j\le  (K\land \mathcal{N}^{(\eta)}_v)-1$. Thus, it suffices to show $\mathcal{M}_{j+1}\leq a_{k+1}$. Take distinct infinite geodesics $\Gamma_1,\cdots,\Gamma_j$ such that $\tilde{\mathcal{S}}(\Gamma_i)<a_{k-1}$. Then since $\eta\notin\Gamma_i$ and $\Gamma_i\not\sim\Gamma$ for any $i$  by (iii) in Lemma~\ref{four}, defining $\Gamma_{l+1}=\Gamma$, $(\Gamma_1,\cdots,\Gamma_{l+1})$ are distinct infinite geodesics.  Thus, by (iv) in Lemma~\ref{four}, we have $\mathcal{M}_{j+1}\leq a_{k+1}$.\\

 The rest of the proof is the same as before.  
 \end{proof}
 Letting $K$ goes to infinity, we have $$\P(\{\mathcal{N}_v<\infty\}\cap \{\exists \Gamma\in \mathcal{I}(v)\text{ such that $\Gamma$ is bad}\})=0.$$
  Exchanging the roles of $0$ and $v$, we have that
  \begin{equation}\label{Lett}
    \begin{split}
      &\P(\{\mathcal{N}_0<\infty\}\cap \{\exists v\in\Z^d\text{ and }\Gamma\in \mathcal{I}(0)\text{
        such that $\Gamma$ is bad from $v$}\})\\
      \leq &\sum_{v\in\Z^d}\P(\{\mathcal{N}_0<\infty\}\cap \{\exists\Gamma\in \mathcal{I}(0)\text{
        such that $\Gamma$ is bad from $v$}\})=0.
      \end{split}
    \end{equation}
   Next lemma corresponds to Lemma~\ref{finite-abs}.
   \begin{lem}
     \begin{equation}\label{Yatt}
   \P(\mathcal{N}_0<N)\leq \P(\exists\text{finite path $\gamma_0$ such that $\I(\gamma_0)\neq\emptyset$ and $\forall\Gamma'\in\I(\gamma_0)$, $\Gamma'$ is bad}).
   \end{equation}
 \end{lem}
 \begin{proof}
   By \eqref{Lett}, it suffices to show that if $\mathcal{N}_0<\infty$ and for any $\Gamma'\in \mathcal{I}(0)$ and $v\in\Z^d$, then $\Gamma'$ is good from $v$, then the event of the right hand side \eqref{Yatt} holds. Let $v\in\Z^d$ and $\Gamma\in\I(v)$ such that for any $\Gamma'\in\I(0)$, $\Gamma'\not\sim \Gamma$. Then by Lemma~\ref{general-good}, $\Gamma$ is bad. We take distinct infinite geodesics $\{\Gamma_i\}^{\mathcal{N}_0}_{i=1}\subset\I(0)$. For any $i\in\{1,\cdots,\mathcal{N}_0\}$, since $\Gamma_i$ is good from $v$, there exists $\ell_i\in\N$ such that for any $\Gamma'\in\I(v)$ with $(\Gamma[i])^{\ell_i}_{i=1}\sqsubset \Gamma'$, $\Gamma'\not\sim\Gamma_i$. Let $\ell=\max_{1\le i\le \mathcal{N}_0}\ell_i$ and $\gamma_0=(\Gamma[i])^{\ell}_{i=1}$. Note that for any $\Gamma'\in\mathcal{I}(v)$, if $\Gamma'\not\sim\Gamma_i$ for any $i$, then since $\mathcal{N}_0$ is the maximum number of distinct infinite geodesics, $\Gamma'$ is bad. Therefore, for any $\Gamma'\in\I(v)$ with $\gamma_0\sqsubset \Gamma'$, $\Gamma'$ is bad.   
 \end{proof}
  \begin{Def}
     An edge $e\in E^d$ is said to be $k$-{\rm pivotal}$_4$ if 
      there exists $\Gamma\in\I(\gamma_0)$ with  $e\in (\Gamma[i])_{i=a_k}^{a_{k+1}}$ and for any $\Gamma'\in\I(\gamma_0)$, if $e\notin \Gamma'$ and $\Gamma'$ is good, then for any sufficiently large $ m$, $$e\in \bigcap_{\gamma\in\OO(0,\Gamma'[m])}\gamma.$$ 
      \end{Def}
    \begin{Def}
$$\mathcal{K}_4=\min_{\Gamma\in \I(\gamma_0)}\min\{t(0,\Gamma[\mathcal{S}(\Gamma')])|~\Gamma'\in\I(\gamma_0),~\Gamma\sim\Gamma',~\mathcal{S}(\Gamma')=\tilde{\mathcal{S}}(\Gamma)\}.$$
      \end{Def}
    Fix $v\in\Z^d$ and a finite path $\gamma_0$ starting at $v$. Let us define
    $$\mathcal{A}_4=\{\text{$\I(\gamma_0)$ is non-empty and $\forall\Gamma\in\I(\gamma_0)$, $\Gamma$ is bad}\},$$
    and $\mathcal{P}_4=\mathcal{A}_4\cap\mathcal{B}\cap \mathcal{C}_3\cap \mathcal{D}$
 \begin{lem}\label{general-cur}
  Given $\eta\in E^d$, we define $\tau^{(\eta)}$ and the term `very bad' as before. Then,
      \begin{equation*}
        \begin{split}
      &\P\left(\{\eta\text{ is $k$--{\rm pivotal}$_4^{(\eta)}$}\}\cap \{\sharp \{e\in E^d|~e\text{ is $k$--{\rm pivotal}$^{(\eta)}$}\}\le 2\delta^{-1}a_{k+1}\}\cap\{\delta a_{k-1}/2\le \mathcal{K}_4<\infty\}
      \right)\\
      &\ge \P\left(\left\{
      \begin{array}{c}
        \exists \Gamma\in\I(\gamma_0), \text{$\Gamma$ is very bad},~\tau_\eta\ge 3M,\\
        ~\exists j\in(a_k,a_{k+1}]\text{ s.t. $\eta=\{\Gamma[j-1],\Gamma[j]\}$}
        \end{array}
      \right\},~\mathcal{P}_4,~\tau^*_e<M\right),
          \end{split}
      \end{equation*}
 \end{lem}
 \begin{proof}
   Step 2--5 will be replace by:\\

   \underline{Step 2''}: $\tilde{\mathcal{S}}^{(\eta)}(\Gamma)<\infty.$ In particular, $\mathcal{K}_4<\infty.$\\
   
   \underline{Step 3}'': For any good$^{(\eta)}$ $\Gamma_1\in \I^{(\eta)}(\gamma_0)$  with $\eta\notin \Gamma_1$, for any sufficiently large $i\in\N$,
   $$\eta\in\bigcap_{\gamma\in \OO^{(\eta)}(0,\Gamma_1[i])}\gamma\text{ and }t^{(\eta)}(0,\Gamma_1[\mathcal{S}^{(\eta)}(\Gamma_1)])\ge \delta a_{k-1}/2.$$
  In particular, $\eta$ is $k$--{\rm pivotal}$^{(\eta)}.$ \\

     \underline{Step 4''}: 
     The following hold:
     \begin{enumerate}
     \item for any $\Gamma_1\in\I^{(\eta)}(\gamma_0)$ with $\eta\in\Gamma_1$, there exists $\Gamma_2\in\I^{(\eta)}(\gamma_0)$ such that $(\Gamma[i])^{\infty}_{i=j-1} \sqsubset\Gamma_2$ and $ \mathcal{S}^{(\eta)}(\Gamma_2)\geq a_{k-1}$, in particular, $\tilde{\mathcal{S}}^{(\eta)}(\Gamma_1)\ge a_{k-1}$.
       \item if $\Gamma_2\in\I^{(\eta)}(\gamma_0)$ satisfying that $\mathcal{S}^{(\eta)}(\Gamma_2)\ge a_{k-1}$, then 
         $$t^{(\eta)}(0,\Gamma_2[\mathcal{S}^{(\eta)}(\Gamma_2)])\ge \delta a_{k-1}/2.$$
     \end{enumerate}
     
  \underline{Step 5}'': If $e\in E^d$ is $k$--{\rm pivotal}$^{(\eta)}$, then  $e\in \cap_{\gamma\in \OO^{(\eta)}(0,x_{j})}\gamma$ or $e\in \{\Gamma[1]\cdots,\Gamma[a_{k+1}]\}$. In particular,
  $$\sharp \{e\in E^d|~e\text{ is $k$--{\rm pivotal}$^{(\eta)}$}\}\le 2\delta^{-1}a_{k+1}.$$ 
  Except for Step 2'' and Step 3'', the proofs are the same as in Lemma~\ref{cru}.
  \begin{proof}[Proof of Step 2'']
    Let $\Gamma_1\in\I^{(\eta)}(\gamma_0)$ be such that $\Gamma_1\sim\Gamma$. Then by Lemma~\ref{four}-(i) and (ii), we obtain $\eta\in\Gamma_1$ and $\Gamma_1\in\mathcal{I}(\gamma_0)$. Let $l\in\N$  be such that $\Gamma_1[l]=\Gamma[j].$ By the same argument as in Step 2, we have  that for any $i\ge l$, $\Gamma_1[i]$ is good$^{(\eta)}$ for $\Gamma_1$. It follows that $ \tilde{\mathcal{S}}^{(\eta)}(\Gamma)<\infty.$
      \end{proof}
    \begin{proof}[Proof of Step 3'']
      By the same argument of Step 1, we get $\Gamma_1\in \I(\gamma_0)$. Thus by the condition of $\mathcal{A}_3$, $\Gamma_1$ is bad. Let $i\in\N$ be such that $i>\mathcal{S}^{(\eta)}(\Gamma_1)$ and $\Gamma_1[i]$ is bad. Since $t^{(\eta)}(0,\Gamma[i])<t(0,\Gamma[i])$, we have $\eta\in \bigcap_{\gamma\in\OO^{(\eta)}(0,\Gamma[i])}\gamma$. Let $l>i$ and $\gamma\in\OO^{(\eta)}(0,\Gamma_1[l])$. We define $l_1=\min\{l_2\in\N|~\gamma[l_2]\in\Gamma_1\}$ and let $l_1'$ be $\Gamma_1[l_1']=\gamma[l_1]$. Note that since $l_1'\le \mathcal{S}^{(\eta)}(\Gamma_1)$ and $(\gamma[1],\cdots\gamma[l_1])\oplus(\Gamma_1[l_1'],\cdots,\Gamma_1[l])\in\OO^{(\eta)}(0,\Gamma_1[l])$, we get $(\gamma[1],\cdots\gamma[l_1])\oplus(\Gamma_1[l_1'],\cdots,\Gamma_1[i])\in\OO^{(\eta)}(0,\Gamma_1[i])$. Together with $\eta\not\in(\Gamma_1[l_1'],\cdots,\Gamma_1[i])$, we have $\eta\in \gamma$.\\

      By the same argument as before, for any $\gamma\in\OO^{(\eta)}(0,\Gamma_1[\mathcal{S}^{(\eta)}(\Gamma_1)])$, we have
      $$\gamma\oplus(\Gamma_1[\mathcal{S}^{(\eta)}(\Gamma_1)],\cdots,\Gamma[i])\in \OO^{(\eta)}(0,\Gamma[i])\text{ and }\eta\in \bigcap_{\gamma\in\OO^{(\eta)}(0,\Gamma_1[\mathcal{S}^{(\eta)}(\Gamma_1)])}\gamma.$$
      With a similar argument to \eqref{zettai}, we obtain
   \begin{equation}
     \begin{split}
       t^{(\eta)}(0,\Gamma_1[\mathcal{S}^{(\eta)}(\Gamma_1)])&\geq  \delta a_{k-1}/2.
        \end{split}
    \end{equation}
    \end{proof}
    We now turn to the proof of Lemma~\ref{general-cur}. Let $\Gamma_1,\Gamma_2\in\I^{(\eta)}(\gamma_0)$ be such that $\Gamma_1\sim\Gamma_2$ and $\tilde{\mathcal{S}}^{(\eta)}(\Gamma_1)=\mathcal{S}^{(\eta)}(\Gamma_2)$. If $\eta\notin \Gamma_2$, then Step 3'' yields $t^{(\eta)}(0,\Gamma_2[\mathcal{S}^{(\eta)}(\Gamma_2)])\ge \delta a_{k-1}/2$. If $\eta\in \Gamma_2$, then by Step 4''-(i), there exists $\Gamma_3\in \I^{(\eta)}(\gamma_0)$ such that $(\Gamma[i])_{i=j-1}^{\infty}\sqsubset \Gamma_3$ and $\mathcal{S}^{(\eta)}(\Gamma_3)\ge a_{k-1}$. Then since $\Gamma_3\sim\Gamma_!$, we have $\tilde{\mathcal{S}}^{(\eta)}(\Gamma_1)\ge a_{k-1}$ and $t(0,\Gamma_2[\mathcal{S}^{(\eta)}(\Gamma_2)])\ge \delta a_{k-1}/2$ by usinig Step 4''-(ii). This yields $\delta a_{k-1}/2\le \mathcal{K}^{(\eta)}_4.$\\

    The rest of the proof is the same as before and we skip the details.
\end{proof}
 %\end{comment}
  \section{Appendix} 
  \subsection{Proof of Lemma~\ref{exp-decay}}
\begin{lem}
  For any $M>0$, there exists $c,c_1,c_2>0$ such that for any $x\in\Z^d$,
  $$\P\left(\min_{\Gamma\in\OO(0,x)}\sharp\{e\in\Gamma|~\tau_e\ge M\}\leq c|x|_1\right)\leq c_1\exp{\{-c_2|x|_1\}}.$$
\end{lem}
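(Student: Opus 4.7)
Plan. I compare the original passage time $t$ with the capped passage time $\tilde t$ defined by the weights $\tilde\tau_e := \tau_e \wedge M$. We may assume $\P(\tau_e \ge M) > 0$, as otherwise the claim is vacuous. Let $\mu, \tilde\mu$ denote the respective time constants; strict monotonicity of the time constant in the edge distribution gives $\mu > \tilde\mu$.

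Under the useful hypothesis together with the exponential moment assumption, I would first assemble three ``good events'', each of probability at least $1 - c_1 e^{-c_2 |x|_1}$ for appropriate constants: (i) a Kesten-type length bound $\sharp\Gamma \le K|x|_1$ for every $\Gamma \in \OO(0,x)$; (ii) the large-deviation estimates $t(0,x) \ge (\mu - \epsilon)|x|_1$ and $\tilde t(0,x) \le (\tilde\mu + \epsilon)|x|_1$; and (iii) a greedy lattice-animal bound
$$\sup\Bigl\{\sum_{e \in A}\tau_e \mathbf{1}[\tau_e \ge M+T] \,:\, 0 \in A,\ |A| \le K|x|_1,\ A \text{ connected}\Bigr\} \le \rho(T)|x|_1,$$
with $\rho(T) \to 0$ as $T \to \infty$. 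Each is standard given the useful condition and exponential moments.

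On the intersection of these events, for any $\Gamma \in \OO(0,x)$, setting $B(\Gamma) := \{e \in \Gamma : \tau_e \ge M\}$ and using the identity
$$t(\Gamma) - \tilde t(\Gamma) = \sum_{e \in B(\Gamma)}(\tau_e - M) \le T\, |B(\Gamma)| + \sum_{e \in \Gamma}\tau_e \mathbf{1}[\tau_e \ge M+T],$$
one ties $|B(\Gamma)|$ to the gap between the two passage times. Balancing $T$ large and $\epsilon$ small then gives $|B(\Gamma)| \ge c|x|_1$ for a suitable $c > 0$.

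The main obstacle is closing the final step for $\tau$-optimal $\Gamma$ rather than $\tilde t$-optimal ones: the inequality $\tilde t(\Gamma) \ge \tilde t(0,x)$ bounds the left-hand side of the display above only from above, so one directly obtains a lower bound on $|B(\Gamma)|$ only for the $\tilde t$-optimal path. To bypass this I would split into two regimes. If $\P(\tau_e < M) < p_c(d)$, the light-edge cluster of $0$ has exponentially decaying diameter, so every path from $0$ to $x$ must traverse at least $c|x|_1$ heavy edges and the conclusion is automatic. In the complementary regime, I would adapt the pivotal-edge resampling scheme of Section~\ref{UBC}: were some $\Gamma \in \OO(0,x)$ to use fewer than $c|x|_1$ heavy edges, resampling a positive-density subset of its light edges to still-smaller values would produce a strictly cheaper path, contradicting optimality of $\Gamma$.
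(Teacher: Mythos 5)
There is a genuine gap, and you have in fact located it yourself: with the capped weights $\tilde\tau_e=\tau_e\wedge M$ the comparison runs the wrong way for $t$-optimal paths. For $\Gamma\in\OO(0,x)$ one only gets $\sum_{e\in B(\Gamma)}(\tau_e-M)=t(0,x)-\tilde t(\Gamma)\le t(0,x)-\tilde t(0,x)$, i.e.\ an \emph{upper} bound on the heavy-edge contribution, which is perfectly consistent with $B(\Gamma)=\emptyset$. Neither of your two repairs closes this. The percolation regime $\P(\tau_e<M)<p_c(d)$ is not the relevant one: the lemma must hold for every $M>0$, and in the application the threshold is $3M$ with $M$ large, so $\P(\tau_e<3M)$ is close to $1$. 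The resampling sketch for the complementary regime is not an argument: lowering the values of light edges lying \emph{on} $\Gamma$ makes $\Gamma$ itself cheaper in the modified environment and contradicts nothing; more fundamentally, resampling changes the environment, so it cannot yield a deterministic contradiction with optimality of $\Gamma$ under the original weights. (The resampling in Section 2 of the paper is a probabilistic counting device that bounds one probability by another; it is not usable as a pathwise contradiction.)

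The fix is to perturb in the opposite direction, which is what the paper does: set $\tilde\tau_e=\tau_e$ if $\tau_e<M$ and $\tilde\tau_e=\tau_e+1$ otherwise. Then $\tilde t(\gamma)-t(\gamma)=\sharp\{e\in\gamma:\tau_e\ge M\}$ exactly for every path $\gamma$, and since $\tilde\tau\ge\tau$, any $t$-optimal $\Gamma$ with at most $c|x|_1/2$ heavy edges yields $\tilde t(0,x)\le\tilde t(\Gamma)=t(0,x)+c|x|_1/2$. The van den Berg--Kesten strict monotonicity \cite{BK93} gives $\E[\tilde t(0,x)]\ge\E[t(0,x)]+c|x|_1$, and concentration of both passage times about their means (Theorem 3.11 of \cite{50}, which is where the exponential moment hypothesis enters) makes the event $\tilde t(0,x)-t(0,x)\le c|x|_1/2$ exponentially unlikely. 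This comparison at the level of means also replaces your large-deviation step via time constants, and no greedy-lattice-animal truncation is needed because the increment per heavy edge is exactly $1$ rather than the unbounded quantity $\tau_e-M$.
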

\begin{proof}
We take $\tilde{\tau}_e$ such that if $\tau_e< M$, $\tilde{\tau}_e=\tau_e$ and otherwise, $\tilde{\tau}_e=\tau_e+1$. The results of \cite{BK93} imply that there exists $c>0$ such that for any $x\in\Z^d$,
$$\E[\tilde{t}(0,x)]\geq \E[t(0,x)]+c|x|_1.$$
Although they only discuss the first passage time from $0$ to $N\mathbf{x}_1$, the same proof works. By Theorem 3.11 in \cite{50}, we have that there exists $c_1,c_2>0$ such that
\begin{equation}
\begin{split}
  \P(|t(0,x)-\E[t(0,x)]|\geq c|x|_1/4)&\leq c_1\exp{(-c_2|x|_1)}\\
  \P(|\tilde{t}(0,x)-\E[\tilde{t}(0,x)]|\geq c|x|_1/4)&\leq c_1\exp{(-c_2|x|_1)}.
  \end{split}
\end{equation}
The yields that 
\begin{equation}
  \begin{split}
    \P(\tilde{t}(0,x)-t(0,x)\leq c|x|_1/2)\leq 2 c_1\exp{(-c_2|x|_1)}.
  \end{split}
\end{equation}
 Note that $\min_{\Gamma\in\OO(0,x)}\sharp\{e\in\Gamma|~\tau_e\ge M\}\leq c|x|_1/2$ implies $\tilde{t}(0,x)-t(0,x)\leq c|x|_1/2$. Therefore the proof is completed.
\end{proof}
\begin{lem}
  There exists $C,c_1,c_2>0$ such that
  \begin{equation}\label{length1}
    \P\left(\max_{\Gamma\in\OO(0,x)}\sharp \Gamma\geq C|x|_1\right)\leq c_1\exp{(-c_2|x|_1)}.\end{equation}
  There exists $c_1,c_2>0$ such that for any $k\in\N$ and $a,b\in \Z^d$ with $|a-b|_1\le \sqrt{k}$,
  \begin{equation}\label{length2}
    \P\left(\max_{\Gamma\in\OO(a,b)}\sharp \Gamma\ge k/2\right)\le c_1\exp{(-c_2|x|_1)}\end{equation}
\end{lem}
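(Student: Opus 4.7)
Both bounds rest on a common mechanism: under usefulness, the passage time along an optimal path grows at least linearly in its edge-length, while the first-passage time $t(a,b)$ grows only linearly in $|a-b|_1$. Comparing the two gives both inequalities. More precisely, I first establish in a preliminary step that there exist $\kappa>0$ and $c_1,c_2>0$ such that, for every vertex $a$ and every length $L$, the probability that \emph{some} self-avoiding path $\Gamma$ of length $L$ starting at $a$ has $t(\Gamma)<\kappa L$ is at most $c_1e^{-c_2L}$. When $F^->0$ this is immediate with $\kappa=F^-$. When $F^-=0$, usefulness gives $\P(\tau_e=0)<p_c(d)$; I would pick $r>0$ with $\P(\tau_e\leq r)<p_c(d)$ and declare an edge \emph{small} if $\tau_e\leq r$. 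Since small-edge percolation is then subcritical, a Peierls-type union bound over self-avoiding paths of length $L$ shows that at least a fraction $\delta$ of the edges along any such path must be \emph{large} ($\tau_e>r$) off an exponentially small event, which yields $t(\Gamma)\geq r\delta L$.

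Granting this preliminary step, the proof of \eqref{length1} is short. Any optimal $\Gamma\in\OO(0,x)$ may be taken self-avoiding and satisfies $t(\Gamma)=t(0,x)\leq(\mu+1)|x|_1$ outside an event of probability $c_1e^{-c_2|x|_1}$, by the upper large-deviation bound for first-passage times (Theorem~3.11 of \cite{50}). Combining this with the preliminary step applied with $L=\sharp\Gamma$ and summing the exponential tails over $L\geq C|x|_1$, I obtain $\kappa\sharp\Gamma\leq(\mu+1)|x|_1$ outside an event of probability $c_1e^{-c_2|x|_1}$, i.e.\ $\sharp\Gamma\leq C|x|_1$ with $C=(\mu+1)/\kappa$.

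The proof of \eqref{length2} is analogous. If some $\Gamma\in\OO(a,b)$ has $\sharp\Gamma\geq k/2$, the preliminary step applied at $a$ (summed over $L\geq k/2$) gives $t(a,b)=t(\Gamma)\geq\kappa k/2$ off an event of probability $c_1e^{-c_2k}$. On the other hand, since $|a-b|_1\leq\sqrt{k}$, the upper large-deviation bound yields $t(a,b)\leq(\mu+1)\sqrt{k}+O(1)$ off an event of probability $c_1e^{-c_2\sqrt{k}}$. For $k$ large, $\kappa k/2>(\mu+1)\sqrt{k}+O(1)$, so the two bounds cannot simultaneously hold; the union of the two failure events has probability at most $c_1e^{-c_2\sqrt{k}}$, which is the required estimate (the exponent $|x|_1$ appearing in the statement must be $\sqrt{k}$).

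The delicate point is the uniformity in the preliminary step when $F^-=0$: one must control \emph{every} self-avoiding path of length $L$ starting at $a$, not just a fixed one. The standard device is a union bound over the at most $(2d)^L$ such walks, weighted by the binomial probability that fewer than $\delta L$ of their edges are large; usefulness, combined with a Chernoff estimate and an appropriate choice of $r$ and $\delta$, makes this sum decay exponentially in $L$. This is the same subcritical-percolation ingredient that underlies Kesten's classical derivations of the shape theorem, and I would invoke it as a black box here.
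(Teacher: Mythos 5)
Your proposal is correct and follows essentially the same route as the paper: the key input is exactly Kesten's Proposition 5.8 (every self-avoiding path of length $\geq r$ from a fixed vertex has passage time $\geq Ar$ off an event of probability $Be^{-Cr}$), combined with an upper large-deviation/exponential-Markov bound on $t(0,x)$, resp.\ $t(a,b)$, and you correctly spot that the exponent $|x|_1$ in \eqref{length2} is a typo (the paper's own argument in fact yields $e^{-c_2k}$ there, via exponential Markov along a deterministic path of length $|a-b|_1\le\sqrt k$). One caution on your parenthetical sketch of the ``preliminary step'': when $F^-=0$ the naive union bound over $(2d)^L$ walks does \emph{not} close under the mere assumption $\P(\tau_e\le r)<p_c(d)$ (which can exceed $1/(2d-1)$), so Kesten's renormalization-based Proposition 5.8 genuinely must be invoked as the black box, as both you and the paper ultimately do.
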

\begin{proof}
  From Proposition 5.8 in \cite{Kes86}, there exist $A,B,C>0$ such that for any $r>0$
   \begin{equation}\label{5.8}
     \begin{split}
       \P\left( \exists\text{ selfavoiding path $\Gamma$ from $0$
         with $|\Gamma|\geq r$ and $t(\Gamma) < A r$}\right) < B \exp{(-Cr)}.
     \end{split}
   \end{equation}
  We take a positive constant $C$ sufficiently large. We use Lemma~3.13 in \cite{50} and \eqref{5.8} with $r=C|x|_1$ to obtain,
   \begin{equation}\label{5.9}
     \begin{split}
       &\quad  \P\left(\max_{\Gamma\in\OO(0,x)}\sharp \Gamma\geq C|x|_1\right)\\
       &\leq
       \P\left( \exists\text{ self-avoiding path $\Gamma$ from $0$
         with $|\Gamma|\geq C|x|_1$ and $t(\Gamma) < A C|x|_1$}\right)+\P(t(0,x)\geq AC|x|_1)\\
       &\leq c_1\exp{(-c_2|x|_1)},
     \end{split}
   \end{equation}
   with some constant $c_1,c_2>0$.
   This yields \eqref{length1}.\\

   Note that $$\P\left(\max_{\Gamma\in\OO(a,b)}\sharp \Gamma\ge k/2\right)\leq    \P\left(\max_{\Gamma\in\OO(a,b)}\sharp \Gamma\ge k/2, t(a,b)< Ak/2\right)+   \P(t(a,b)\ge Ak/2).$$ The first term can be bounded by \eqref{5.8}. By exponential Markov inequality, the second term also can be bounded from above by $c_1e^{-c_2k}$ with some $c_1,c_2>0$. 
\end{proof}
\begin{proof}[Proof of Lemma~\ref{exp-decay}]
  If we take $\delta>0$ sufficiently small and $C,L>0$ sufficiently large,
  \begin{equation}
    \begin{split}
      \P(\mathcal{C}^c)&\leq \sum_{a,b\in[-k,k]^d,|a-b|_1\ge \sqrt{k}}\P((a,b)\text{ is not black })+\sum_{a,b\in[-k,k]^d,|a-b|_1< \sqrt{k}}\P(\max_{\Gamma\in\OO(a,b)}\sharp \Gamma\ge k/2)\\
      &\leq 3(2k+1)^d c_1e^{-c_2\sqrt{k}}\leq c_1e^{-c_2\sqrt{k}/2}.
  \end{split}
  \end{equation}
\end{proof}
\subsection{Proof of Lemma~\ref{exp-decay2}}
\begin{lem}\label{large M}
  Suppose that $F^+<\infty$ and $F$ is useful. Let $F^-<\alpha_2<F^+$. For any $M\in\N$ there exists
$c,c_1,c_2>0$ such that for any $x\in\Z^d$,
$$\P\left(\min_{\Gamma\in\OO(0,x)}\sharp\{\gamma\in\mathbb{T}_M|~\gamma\sqsubset \Gamma,~\forall
e\in \gamma,~\tau_e\geq \alpha_2\} \geq c
|x|_1\}\right)\leq c_1\exp{(-c_2 |x|_1)}.$$
\end{lem}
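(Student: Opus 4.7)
The statement asserts that with probability at least $1 - c_1 e^{-c_2|x|_1}$, some optimal path $\Gamma \in \OO(0,x)$ has fewer than $c|x|_1$ all-heavy length-$M$ sub-paths. My plan is to deduce this immediately from the uniform length bound \eqref{length1} on optimal paths, already proved earlier in this Appendix.

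The key observation is elementary: for any path $\Gamma$, the number of length-$M$ sub-paths of $\Gamma$ equals $\sharp\Gamma - M + 1 \leq \sharp \Gamma$, so writing
\[ N(\Gamma) = \sharp\{\gamma \in \mathbb{T}_M : \gamma \sqsubset \Gamma,\ \forall e \in \gamma,\ \tau_e \geq \alpha_2\} \]
for brevity, one has the trivial domination $N(\Gamma) \leq \sharp \Gamma$. Hence
\begin{equation*}
  \min_{\Gamma \in \OO(0,x)} N(\Gamma) \;\leq\; \min_{\Gamma \in \OO(0,x)} \sharp \Gamma \;\leq\; \max_{\Gamma \in \OO(0,x)} \sharp \Gamma.
\end{equation*}
Picking $c$ strictly larger than the constant $C$ from \eqref{length1} and applying that bound gives
\begin{equation*}
\P\!\left(\min_{\Gamma \in \OO(0,x)} N(\Gamma) \geq c|x|_1\right) \;\leq\; \P\!\left(\max_{\Gamma \in \OO(0,x)} \sharp \Gamma \geq c|x|_1\right) \;\leq\; c_1 e^{-c_2 |x|_1},
\end{equation*}
which is the claim. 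There is no genuine technical obstacle; the nontrivial input --- the uniform length bound on optimal paths --- is already in hand, and the remaining step is a deterministic inclusion.

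Since this argument invokes neither the usefulness of $F$, nor $F^+ < \infty$, nor the placement of $\alpha_2$ in $(F^-, F^+)$, and since the analogous first lemma of the Appendix is non-trivial with the reversed inequality $\leq c|x|_1$ (and relies crucially on \cite{BK93}), I suspect the direction ``$\geq c|x|_1$'' in Lemma~\ref{large M} may be a typo for ``$\leq c|x|_1$''. Were that the intended statement (as is indeed needed by the \emph{black2} condition of Lemma~\ref{exp-decay2}), the plan would be to mimic the first lemma: perturb $\tau_e$ upward by $\eta$ on edges with $\tau_e \geq \alpha_2$, invoke BK to get $\E[\tilde{t}(0,x)] \geq \E[t(0,x)] + c|x|_1$, apply Kesten-type concentration, and translate the increment into a lower bound on the count. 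The main obstacle in that reversed direction would be that this natural local modification controls the number of heavy \emph{edges} on the minimizer rather than heavy $M$-\emph{sub-paths}, so one would need either a non-local modification keyed to $M$-blocks or a supplementary combinatorial argument converting heavy-edge density into $M$-run density.
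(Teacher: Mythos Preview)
Your reading is correct: as literally stated the lemma is trivial, and your one-line proof via the length bound \eqref{length1} is valid. You are also right that the inequality inside the probability is a typo; both the paper's own proof and the downstream use (the \emph{black2} condition in Lemma~\ref{exp-decay2}) require the reversed inequality
\[
\P\!\left(\min_{\Gamma\in\OO(0,x)} N(\Gamma) \le c|x|_1\right)\le c_1 e^{-c_2|x|_1}.
\]

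For that intended statement, your sketch has the right architecture (perturb, mean separation, concentration) but the specific perturbation you propose runs into exactly the obstacle you flag: bumping $\tau_e$ on heavy edges controls the heavy-edge count, not the heavy-$M$-run count. The paper sidesteps this by building the $M$-run count directly into the perturbed passage time: it sets
\[
t^+(\Gamma)=t(\Gamma)+\beta\,N(\Gamma),\qquad t^+(0,x)=\min_\Gamma t^+(\Gamma),
\]
so that $\beta\min_{\Gamma\in\OO(0,x)}N(\Gamma)\ge t^+(0,x)-t(0,x)$ holds deterministically. This is the key device you were missing; it reduces the problem to a mean-separation estimate $\E t^+(0,x)-\E t(0,x)\ge c|x|_1$ plus concentration for both $t$ and $t^+$.

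The second difference is how the mean separation is obtained. You propose invoking van den Berg--Kesten directly, but $t^+$ is not an i.i.d.\ edge-weight model (the penalty depends on $M$-tuples of consecutive edges), so \cite{BK93} does not apply off the shelf. The paper instead runs a box-resampling argument in the style of \cite{FPP-prop}: it shows that along any optimal path there is a positive density of ``gray'' $n$-boxes, and that resampling the configuration inside such a box to force a straight segment with all weights $\ge\alpha_2$ produces, with positive probability, an $M$-run contained in that box. Summing over boxes gives $\E[\min_{\Gamma\in\OO^+(0,x)}N(\Gamma)]\ge c|x|_1$. Concentration then comes from Theorem~3.13 of \cite{50} for $t$ and a \cite{DK14}-type lower-tail bound for $t^+$.
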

\begin{proof}
  Given a path $\Gamma$, we define the new passage time as
  $$t^+(\Gamma)=\sum_{e\in\Gamma}\tau_e+\beta \sharp\{\Gamma\in\mathbb{T}_M|~\gamma\sqsubset \Gamma,~\forall
e\in \gamma,~\tau_e\geq \alpha_2\},$$
  where $\beta$ is a positive constant chosen to be later. Let us
denote the corresponding first passage time from $0$ to $x$ by
$t^+(0,x)$.
\begin{lem}\label{Large M mean}
  There exists $c>0$ such that for any $x\in\Z^d$
  $$\E[t^+(0,x)]-\E[t(0,x)]\geq c|x|_1.$$
\end{lem}
First we prove Lemma~\ref{large M}. By Theorem~3.13 in \cite{50}, we have that
there exist $c_1,c_2$ such that
$$\P(|t(0,x)-\E[t(0,x)]|\geq c\beta|x|_1/4)\leq c_1\exp{(-c_2 |x|_1)}.$$
The same argument of \cite{DK14} leads to that
$$\P(t^+(0,x)-\E[t^+(0,x)]\leq -c\beta|x|_1/4)\leq
c_1\exp{(-c_2 |x|_1)}.$$
Therefore
$\P(t^+(0,x)-t(0,x)\leq c\beta|x|_1/2)\leq c_1\exp{(-c_2 |x|_1)}.$
Note that
$$\beta \min_{\Gamma\in\OO(0,x)}\sharp\{\gamma\in\mathbb{T}_M|~\gamma\sqsubset \Gamma,~\forall  e\in \gamma,~\tau_e\geq \alpha_2\}\geq t^+(0,x)-t(0,x).$$ Thus, we complete the proof.
\end{proof}
The proof of Lemma~\ref{exp-decay2} is the same as before. The rest will be devoted to Lemma~\ref{Large M mean}. Since
$\beta \min_{\Gamma\in\OO^+(0,x)}\sharp\{\gamma\in\mathbb{T}_M|~\gamma\sqsubset \Gamma,~\forall  e\in \gamma,~\tau_e\geq \alpha_2\}\leq t^+(0,x)-t(0,x)$, it suffices to show that
\begin{equation}\label{Goal}
  \E\left[\min_{\Gamma\in\OO^+(0,x)}\sharp\{\gamma\in\mathbb{T}_M|~\gamma\sqsubset \Gamma,~\forall  e\in \gamma,~\tau_e\geq \alpha_2\}\right ]\ge c|x|_1.
  \end{equation}
\begin{proof}[Proof of \eqref{Goal}]
  The proof is very similar to that of Lemma~5 in \cite{FPP-prop}. We only touch with the
  difference of them. Let $n\in\N$. We consider the following boxes:
$$S(l;n)=\{v\in\Z^d:nl\le v_i< n(l+1)\text{ for any $i$}\}.$$
$$T(l;n)=\{v\in\Z^d:nl-n\le v_i\le n(l+2)\text{ for any $i$}\}.$$
  $$B^j(l;n)=T(l;n)\cap{}T(l+2\sgn(j)\mathbf{e}_{|j|};n).$$

\begin{lem}\label{useful}
If $F$ is useful, then there exsits $\delta>0$ and $D>0$ such that for any $v,w\in\Z^d$,
$$\P(t(v,w)<\delta|v-w|_1)\leq{}e^{-D|v-w|_1}.$$
\end{lem}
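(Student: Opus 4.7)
The proof splits into two cases according to the value of $F^-$, reflecting the two clauses in the definition of ``useful''.

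If $F^->0$, the lemma is essentially trivial. Every edge satisfies $\tau_e\ge F^-$ almost surely, and any path $\gamma$ from $v$ to $w$ has $\sharp\gamma\ge|v-w|_1$. Hence $t(v,w)\ge F^-|v-w|_1$ deterministically, and the choice $\delta:=F^-/2$ makes $\P(t(v,w)<\delta|v-w|_1)=0$, so any $D>0$ works. Note that the oriented-percolation part of the useful hypothesis ($\P(\tau_e=F^-)<\vec p_c(d)$) is not strictly needed for this weak bound; it would only be needed to improve the conclusion to $\mu(\cdot)>F^-$.

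The real work is in the case $F^-=0$, where the useful hypothesis reads $p_0:=\P(\tau_e=0)<p_c(d)$. By right-continuity of the distribution function at $0$, I would pick $\epsilon>0$ such that $p_\epsilon:=\P(\tau_e\le\epsilon)<p_c(d)$. Declaring an edge $e$ to be \emph{open} iff $\tau_e\le\epsilon$ produces a subcritical Bernoulli bond percolation on $\Z^d$; by the Aizenman--Barsky/Menshikov theorem one has exponential decay of the open cluster radius,
\begin{equation*}
\P\bigl(\mathrm{diam}(\mathcal{C}(x))\ge R\bigr)\le C_0e^{-c_0 R},\qquad R\ge 0,\ x\in\Z^d.
\end{equation*}

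I would then run a block-renormalization argument analogous to Lemma~5 of \cite{FPP-prop}. Partition $\Z^d$ into boxes of side length $n$, and declare a box \emph{good} if every self-avoiding path crossing a fixed enlargement of it contains at least $cn$ edges with $\tau_e>\epsilon$. The exponential decay of open clusters together with a finite-range condition makes the good/bad status of well-separated boxes essentially independent, and a finite-size calculation shows that the probability of being good tends to $1$ as $n\to\infty$. Consequently the good boxes stochastically dominate a supercritical site percolation with parameter as close to $1$ as we like. Any self-avoiding path realizing $t(v,w)$ must cross at least of order $|v-w|_1/n$ boxes; summing the contributions of closed edges in the good boxes alone gives $t(v,w)\ge \delta|v-w|_1$ with $\delta:=c\epsilon/(2n)$ (say), while the bad boxes the path touches form only microscopic clusters by the standard Peierls/large-deviation estimate in supercritical site percolation, contributing a further factor $e^{-D|v-w|_1}$ to the failure probability.

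The principal obstacle is the finite-size step: proving that a single box is good with probability close to $1$. This requires controlling arbitrarily long crossing paths inside the box, not just those of length of order $n$, so the subcritical decay of cluster diameters must be combined with Kesten-type large deviations on the length of optimal paths (cf.\ the bound of Proposition~5.8 of \cite{Kes86} already used to prove Lemma~\ref{exp-decay}) in order to forbid long open detours. Once this block estimate is in place, the passage from good-box percolation to the exponential bound on $t(v,w)$ is routine.
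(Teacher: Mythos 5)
The paper does not prove this lemma at all: it simply cites Lemma~5.5 of van den Berg--Kesten \cite{BK93} (and in fact the statement also follows by summing the already-quoted estimate \eqref{5.8}, Kesten's Proposition~5.8, over path lengths $n\ge|v-w|_1$, since every path from $v$ to $w$ has at least $|v-w|_1$ edges). Your two-case split is the right one, the $F^->0$ case is correctly disposed of by the deterministic bound $t(v,w)\ge F^-|v-w|_1$, and your observation that the oriented-percolation clause of ``useful'' is not needed here is accurate. The $F^-=0$ case is the standard renormalization argument behind the citation, so in architecture you are on the right track.

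There is, however, a genuine flaw in your finite-size step, and it is not the one you flag. Your good-box event --- every self-avoiding crossing of the enlarged box contains at least $cn$ edges with $\tau_e>\epsilon$ --- does \emph{not} have probability tending to $1$ as $n\to\infty$ for fixed $c>0$. Between two consecutive closed edges a path stays inside a single $\epsilon$-open cluster, so a crossing with $k$ closed edges satisfies $n\le k(1+\max_i d_i)$ where the $d_i$ are the diameters of the open clusters it visits; demanding $k\ge cn$ therefore forces every open cluster meeting the box to have diameter $O(1/c)$, a constant, and over the $\asymp n^d$ vertices of the box the probability of that event tends to $0$, not $1$ (whenever $\P(\tau_e\le\epsilon)>0$). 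The fix is to ask much less of a good box: require only that no $\epsilon$-open cluster crosses the annulus between the box and its enlargement, i.e.\ that every crossing contains at least \emph{one} edge with $\tau_e>\epsilon$. Subcritical exponential decay of cluster radii gives this with probability at least $1-Cn^{d}e^{-c_0n}$ by a plain union bound, with no need to control arbitrarily long crossing paths --- so your ``principal obstacle'' dissolves, and the appeal to Kesten's Proposition~5.8 at that point (which would be circular, as it is essentially the statement being proved) is unnecessary. The linear lower bound then comes not from many closed edges per box but from many boxes per path: any path from $v$ to $w$ crosses at least $|v-w|_1/(Cn)$ boxes with disjoint interiors, a positive fraction of which are good except on an event of probability $e^{-D|v-w|_1}$ (LSS domination plus a Peierls count over lattice animals of boxes, uniformly over all paths), each contributing at least $\epsilon$ to the passage time; this yields $\delta=\epsilon/(C'n)$ with $n$ a fixed constant.
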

For the proof of this lemma, see Lemma 5.5 in \cite{BK93}.

For simplicity, we set $B=B^j(l;n)$. We take
sufficiently large $R>0$ to be chosen later.

 \begin{Def} We define following conditions;\\

\noindent (1)for any $v,w\in B^j(l;n)$ with $|v-w|_1\geq{}n^{1/3}$,
$$t(v,w)\geq{}(F^{-}+\delta)|v-w|_1,$$
where $\delta>0$ is in Lemma~\ref{useful}. (Note that
$t^+(v,w)\geq{}t(v,w)$.)\\

\noindent(2)for any $e\cap B\neq\emptyset$, $\tau_e\leq{}F^+-R^{-1}$.\\

\noindent{}An $n$-box $B$ is said to be black if
$\begin{cases}
  \text{(1) and (2) hold}&\text{ if $\P(\tau_e=F^+)=0$}\\
  \text{(1) holds}&\text{ if $\P(\tau_e=F^+)>0$}\\
\end{cases}$
 \end{Def}
\end{proof}
 Hereafter ``crossing an $n$-box" means crossing in the short direction. See Figure~\ref{fig:two}.
 \begin{figure}[b]
  \includegraphics[width=4.0cm]{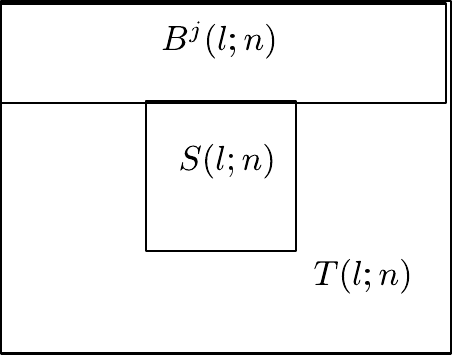}
  \hspace{8mm}
  \includegraphics[width=4.0cm]{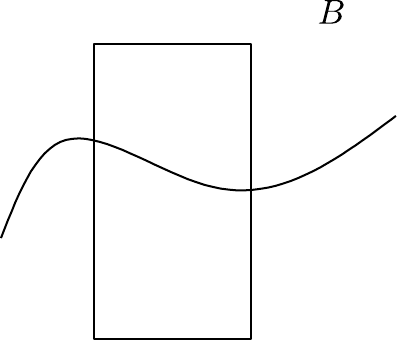}
\caption{}
Left: Boxes: $S$, $T$, $B$.\\
Right: $\mathbb{O}(0,x)$ crosses an $n$-box in the short direction.\\
  \label{fig:two}
\end{figure}
\begin{Def}
  ~\\
\noindent{}An $n$-box $B$ is said to be white if
there exists $\Gamma\in\mathbb{O}(0,x)$ such that $\Gamma$ cross $B$. \\

\noindent{}An $n$-box $B$ is said to be gray if $B$ is black and white.

 \end{Def}

As in (2.4) of \cite{FPP-prop}, we obtain that there exists $\e>0$ such that for any $x\in\Z^d$,
\begin{equation}\label{numb}
  \E[\sharp\{\text{distinct gray $n$-box $B$} \}]\geq{}\epsilon{}|x|_1/2
  \end{equation}

\begin{Def}
  Define\\
  
  \noindent$$F^+_R=\begin{cases}
  \text{$F^+-R^{-2}$}&\text{ if $F^+<\infty$ and $\P(\tau_e=F^+)=0$,}\\
  \text{$F^+$}&\text{ if $F^+<\infty$ and $F(\{F^+)>0$,}\\
  \end{cases}$$
  and\\
  $$F^-_R=\begin{cases}
  \text{$F^-+R^{-2}$}&\text{ if $ \P(\tau_e=F^-)=0$,}\\
  \text{$F^-$}&\text{ if $ \P(\tau_e=F^-)>0$.}
\end{cases}$$
\end{Def}
Note that if $R$ is sufficiently large,
\begin{equation}\label{estimate}
  F^-_R<F^-+\delta/2< F^+_R\text{ and }F^-_R\leq \alpha_2\leq F^+_R.
\end{equation}

     Denote by $\partial^+ B$ the outer boundary of an $n$-box $B$. Let $n_1=[d\sqrt{n}]+d$.
  If we take $n$ sufficiently large, for any $a,b\in\partial^+ B$ with \\
    $|a-b|_1\geq{}\delta n/(2F^+)$, there exists a self-avoiding path $\gamma_{a,b}=(x_0,\cdots,x_{l})$ from $a$ to $b$ satisfying $\{x_i\}^{l-1}_{i=1}\subset B$ such that the following hold:
   \begin{equation}\label{gam}
     \begin{array}{l}
       \text{ (1) $d_{\infty}(x_{n_1},B^c),d_{\infty}(x_{l-n_1},B^c)\geq \sqrt{n}$},\\
       \text{ (2) $|x_{n_1}-x_{l-n_1}|_1=l-2n_1$},\\
       \text{ (3) $d_{\infty}(x_i,B^c)\geq n_1$ for any $i\in\{n_1,\cdots,l-n_1\}$}.\\
       \text{ (4) $(x_{i-M},\cdots,x_{i+M})$ is a straight line for any $i\in I_{a,b}$},\\
     \end{array}    
   \end{equation}
   where   $I_{a,b}=\{n_1,\cdots,l-n_1\}\cap n_1\Z$.
   The reason why we use $\sqrt{n}$ is just $\sqrt{n}\ll n$ and not important. We take such a path to each $a,b\in\partial^+ B$ with $|a-b|_1\geq{}\delta n/(2F^+)$. For $a,b\in\partial^+ B$ with $|a-b|_1\geq{}\delta n/(2F^+)$, we take arbitrary self-avoiding path from $a$ to $b$.\\
   
   Let $a,b\in\partial^+ B$ with $|a-b|_1\geq{}\delta n/(2F^+)$ and $\gamma_{a,b}=(x_i)^l_{i=1}$. Given a  path $\gamma=\gamma_{a,b}=(x_0,\cdots,x_{|\gamma|})$ and $n$-Box $B$, $\tau$ is said to be satisfied $(\gamma,B)$-condition if (1) $\tau(x_{i-1},x_i)\in (\alpha_2,F^+_R]$ if there exists $j\in I_{a,b}$ such that $|i-j|\leq M$, (2) $\tau(x_{i-1},x_i)\leq F^-_R$ otherwise, (3) $\tau_e\geq{}F^+_R$ if $e\notin\gamma$ and $e\cap B\neq\emptyset$. Denote the independent copy of $\tau$ by $\tau^*$ and set $\tau^{B}$ as $\tau^{B}_e=\tau_e^{*}$ if $e\cap{}B\neq\emptyset$, $\tau^{B}_e=\tau_e$ otherwise. Let $(\tilde{a},\tilde{b})$ be random variable on $\partial^+ B\times \partial^+ B$ with uniform distribution and its probability measure $P$. Given a path $\Gamma=(x_0,\cdots,x_l)$ and an $n$-box $B$, we set
 $$\text{st}(\Gamma,B)=x_{\min\{i|~x_i\in \partial^+ B\}}\text{, fin}(\Gamma,B)=x_{\max\{i|~x_i\in \partial^+ B\}}.$$
   Note that if $\Gamma$ cross $B$ and $B$ is black, then since $\text{$t^+($st$(\Gamma,B)$,fin$(\Gamma,B))$}\geq{}(F^-+\delta )n$, $$\text{$|$st$(\Gamma,B)$-fin$(\Gamma,B)$}|_1\geq \frac{(F^-+\delta)n}{2F^+}+1.$$
   \begin{Def}
     An $n$-box $B$ is called $\mathbb{G}$ood if for any $\Gamma\in\mathbb{O}^+(0,x)$, there exists $\gamma\in  \mathbb{T}_M$ such that $\gamma\subset B$, $\gamma\sqsubset \Gamma$ and for any $e\in\gamma$, $\tau_e\geq \alpha_2$.
 \end{Def}

 \begin{lem}\label{ineq}
 We take $\beta=R^{-2}$. If $R\geq{}n^{2d}$ and $n$ is sufficiently large, then there exists $c>0$ such that for any $N\in\N$, unless $0\in B$ or $N\mathbf{e}_1\in B$,
 \begin{equation}\label{crucial2}
\begin{split}
   &\P(\text{ $B$ is $\mathbb{G}$ood for $\tau$})=P\otimes\P(\text{ $B$ is $\mathbb{G}$ood for $\tau^{B}$})\\
  & \geq{}P\otimes\P\left(
    \begin{array}{c}
      \text{$B$ is gray for $\tau$, $\exists\Gamma\in\mathbb{O}^+(0,x)$ s.t. $\Gamma$ cross $B$,} \\
      \text{$(\tilde{a},\tilde{b})=(\text{\rm{st}}(\Gamma,B),\text{\rm{fin}}(\Gamma,B)))$ and $\tau^*$ satisfies $(\gamma_{\tilde{a},\tilde{b}}$,$B)$-condition}\\
\end{array}
  \right)\\
  &=\frac{1}{|\partial^+{}B|^2}\sum_{(a,b)}\P\left(
  \begin{array}{c}
    \text{$B$ is gray for $\tau$, $\exists\Gamma\in\mathbb{O}^+(0,x)$ s.t. $\Gamma$ cross $B$,}\\
    \text{$(a,b)=(\text{\rm{st}}(\Gamma,B),\text{\rm{fin}}(\Gamma,B)))$, $\tau^*$ satisfies $(\gamma_{a,b}$,$B)$-cond.}\\
    \end{array}\right)\\
 &\geq{}c\P(\text{$B$ is gray}).\\
\end{split}
 \end{equation}
 \end{lem}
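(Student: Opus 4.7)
The plan is to chain the four comparisons in the statement. The first equality $\P(B\text{ is }\mathbb{G}\text{ood for }\tau)=P\otimes\P(B\text{ is }\mathbb{G}\text{ood for }\tau^{B})$ is the distributional invariance of the local resampling: $\{\tau^{B}_e\}$ has the same law as $\{\tau_e\}$, and the auxiliary uniform pair $(\tilde a,\tilde b)$ on $\partial^+B\times\partial^+B$ is independent of both. The third step, the equality with $\frac{1}{|\partial^+B|^2}\sum_{(a,b)}(\cdots)$, follows from the law of total probability over the value of $(\tilde a,\tilde b)$ together with the independence of $\tau^*$ from $(\tau,\tilde a,\tilde b)$: each summand factors, and summing its $\tau$-part over $(a,b)$ recovers $\{B\text{ is gray}\}$.

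The substantive content is in the two inequalities. For the second line I would carry out a purely deterministic argument: if $\tau$ makes $B$ gray, if some $\Gamma\in\mathbb{O}^+(0,x)$ crosses $B$ with $(a,b)=(\text{st}(\Gamma,B),\text{fin}(\Gamma,B))$, and if $\tau^*$ satisfies the $(\gamma_{a,b},B)$-condition, then $B$ is $\mathbb{G}$ood for $\tau^{B}$. Under $\tau^{B}$ the path $\gamma_{a,b}$ becomes a cheap corridor across $B$ (weights in $(\alpha_2,F^+_R]$ on the straight $M$-blocks, at most $F^-_R$ elsewhere), while every edge of $B$ off $\gamma_{a,b}$ weighs at least $F^+_R$. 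The gap $F^+_R-F^-_R>\delta/2$ from \eqref{estimate}, combined with the black-box lower bound $(F^-+\delta)n$ on the original $\tau$-crossing of $B$, shows that the path obtained from $\Gamma$ by rerouting its crossing along $\gamma_{a,b}$ is strictly cheaper than $\Gamma$ itself under $\tau^{B}$, while any path avoiding $B$ has unchanged $\tau^{B}$-weight and is therefore also beaten. Hence every optimizer for $\tau^{B}$ must enter $B$ at $(a,b)$ and follow $\gamma_{a,b}$ inside; in particular it contains as a literal subpath each straight $M$-block of $\gamma_{a,b}$ centered at the indices $i\in I_{a,b}$, all of whose edges satisfy $\tau^{B}_e>\alpha_2$, witnessing $\mathbb{G}$ood.

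For the fourth inequality I would bound $\P(\tau^*\text{ satisfies the }(\gamma_{a,b},B)\text{-condition})$ from below by a positive constant $c=c(n,M,R,F)$, uniformly in $(a,b)$. This probability factorizes over edges as a product of at most $O(n^d)$ Bernoulli factors of three types: $\P(\tau\le F^-_R)$, $\P(\tau\in(\alpha_2,F^+_R])$, and $\P(\tau\ge F^+_R)$. Each is strictly positive --- the first and third by the very definitions of $F^\pm$ and of $F^\pm_R$, the middle one because $\alpha_2<F^+$ and $R$ is chosen large enough that $\alpha_2<F^+_R$. With $n,M,R$ fixed in this lemma, the product is a positive constant, and the chain closes.

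The main obstacle is the deterministic step in the second line. One must guarantee not just that \emph{some} optimum for $\tau^{B}$ contains $M$ consecutive edges of weight $>\alpha_2$ inside $B$, but that \emph{every} optimum does so, and as a literal sub-path inclusion. This is the purpose of the geometric conditions \eqref{gam}(1)--(4) imposed on $\gamma_{a,b}$ (minimum $\sqrt n$ distance from $\partial^+B$, straight $M$-segments centered at indices in $I_{a,b}$, spacing $n_1=\lfloor d\sqrt n\rfloor+d$). The choice $R\ge n^{2d}$ with $\beta=R^{-2}$ is calibrated so that the additive bonus in $t^+$ for good $M$-subpaths is dwarfed by the cumulative edge-by-edge gap $F^+_R-F^-_R$ over the $\Theta(n)$ edges of $\gamma_{a,b}$, preventing any competitor inside $B$ from undercutting the corridor.
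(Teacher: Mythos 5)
Your proposal is correct and follows essentially the same route as the paper, which gives no details here and simply invokes Lemma~5 of \cite{FPP-prop}. The one step you assert rather than fully verify --- that under $\tau^{B}$ \emph{every} $t^+$-optimizer must traverse a full straight $M$-block of the corridor $\gamma_{a,b}$ (so that paths merely grazing $B$ or using the corridor partially are excluded) --- is precisely the content of that cited lemma, and your identification of the relevant ingredients (the gap $F^+_R-F^-_R$, the blackness lower bound $(F^-+\delta)n$ on the original crossing, and the calibration $\beta=R^{-2}$ so the $t^+$-bonus is negligible against the per-edge gap) is accurate.
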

 \begin{proof}
   The proof is the same as in Lemma~5 of \cite{FPP-prop} and we skip the details.
 \end{proof}
 From \eqref{numb} and \eqref{crucial2}, we have that there exists $c>0$ such that  
\begin{equation}
\begin{split}
\E\left[\min_{\Gamma\in\OO^+(0,x)}\sharp\{\gamma\in\mathbb{T}_M|~\gamma\sqsubset \Gamma,~\forall  e\in \gamma,~\tau_e\geq \alpha_2\}\right ]&\geq{}\frac{1}{2d}\sum_{B^j(l;n):n\text{-box}}\P(B^j(l;n)\text{ is $\mathbb{G}$ood})\\
  &\geq \frac{c}{2d}\sum_{B^j(l;n):n\text{-box}}\P(B^j(l;n)\text{ is gray})\\
  &\geq \frac{c}{2d}\E[\sharp\{\text{distinct gray $n$-box $B$} \}]\\
  &\geq{}\frac{\e c}{4d}|x|_1,
  \end{split}
\end{equation}
where $2d$ appears because of the overlap of $n$-boxes. Thus the proof is completed.

\section*{Acknowledgements}
The author would like to express his gratitude to Christopher Hoffman and Daniel Ahlberg for helpful comments on the definition of the number of infinite geodesics for discrete distributions.  This research is supported by JSPS KAKENHI 16J04042.

\end{document}